\newcommand{\R}{\mathbb{R}}
\newcommand{\C}{\mathbb{C}}
\newcommand{\N}{\mathbb{N}}
\newcommand{\EV}{\mathbb{E}}
\newcommand{\PP}{\mathbb{P}}
\newcommand{\Z}{\mathbb{Z}}
\newcommand{\ve}{\varepsilon}
\newcommand{\ch}{\mathbf 1}
\newcommand{\lan}{\langle}
\newcommand{\ran}{\rangle}
\DeclareMathOperator{\spa}{span}
\begin{document}

\title{Continuous frames and the Kadison-Singer problem}

\author{Marcin Bownik}

\institute{Institute of Mathematics, Polish Academy of Sciences, ul. Wita Stwosza 57,
80--952 Gda\'nsk, Poland and Department of Mathematics, University of Oregon, Eugene, OR 97403--1222, USA \email{mbownik@uoregon.edu}}

\maketitle



\abstract{
In this paper we survey a recent progress on continuous frames inspired by the solution of the Kadison-Singer problem \cite{KS} by Marcus, Spielman, and Srivastava \cite{MSS}. We present an extension of Lyapunov's theorem for discrete frames due to Akemann and Weaver \cite{AW} and a similar extension for continuous frames by the author \cite{Bow2}. 
We also outline a solution of the discretization problem, which was originally posed by Ali, Antoine, and Gazeau \cite{aag2}, and recently solved by Freeman and Speegle \cite{FS}. 
}

\keywords{continuous frame, discretization problem, coherent state, pure state, Kadison-Singer problem, Lyapunov's theorem, positive operator-valued measure}

\section{From pure states to coherent states}

The solution of the Kadison-Singer problem by Marcus, Spielman, and Srivastava \cite{MSS} has had a great impact on several areas of analysis. This is due to the fact that the Kadison-Singer problem \cite{KS} was known to be equivalent to several well-known problems such as Anderson paving conjecture \cite{AA, An3}, Bourgain--Tzafriri restricted invertibility conjecture \cite{BT3}, Feichtinger's conjecture \cite{CCLV}, Weaver's conjecture \cite{We}. We refer to the survey  \cite{CT} and the papers \cite{Bow, BCMS, CT2} discussing the solution of the Kadison-Singer problem and its various ramifications.

The original formulation of the Kadison-Singer problem \cite{KS} asks whether a pure state on a maximal abelian self-adjoint algebra (MASA)  has a unique extension to the whole algebra of bounded operators $\mathcal B(\mathcal H)$ on a separable Hilbert space $\mathcal H$. In more concrete terms, let $\mathcal D \subset \mathcal B(\ell^2(\N))$ be the algebra of diagonal operators.  A state $s: \mathcal D \to \C$ is a positive  bounded linear functional $(A\ge 0 \implies s(A) \ge 0)$ such that $s(\mathbf I)=1$. A state is pure if it is not a convex combination of other states. The Kadison-Singer problem asks whether every pure state on $\mathcal D$ has a unique extension to a state on $\mathcal B(\ell^2(\N))$. 

In mathematical physics literature, there exists another meaning for a state, that is a coherent state. An authoritative treatment of coherent states and its various generalizations can be found in the book of Ali, Antoine, and Gazeau \cite{aag2}. Among several properties satisfied by canonical coherent states \cite[Chapter 1]{aag2}, they constitute an overcomplete family of vectors in the Hilbert space for the harmonic oscillator. In particular, coherent states satisfy an integral resolution of the identity, which naturally leads to the notion of a continuous frame. This is a generalization of the usual (discrete) frame, which was proposed independently by Ali, Antoine, and Gazeau \cite{aag} and by Kaiser \cite{Ka}, see also \cite{aag2, FR, GH}.

\begin{definition}\label{cf} Let $\mathcal H$ be a separable Hilbert spaces and let $(X,\mu)$ be a measure space. A family of vectors $\{\phi_t\}_{t\in X}$ is a {\it continuous frame} over $X$ for $\mathcal H$ if:
\begin{enumerate}[(i)]
\item for each $f\in \mathcal H$, the function $X \ni t \mapsto \langle f , \phi_t \rangle \in \C$ is measurable, and
\item there are constants $0<A \le B< \infty$, called {\it frame bounds}, such that 
\begin{equation}\label{cf1}
A||f||^2 \le \int_X |\langle f, \phi_t \rangle|^2 d\mu (t) \le B ||f||^2 \qquad\text{for all }f\in\mathcal H.
\end{equation}
\end{enumerate}
When $A=B$, the frame is called {\it tight}, and when $A=B=1$, it is a {\it continuous Parseval frame}. More generally, if only the upper bound holds in \eqref{cf1}, that is even if $A=0$, we say that $\{\phi_t\}_{t\in X}$ is a {\it continuous Bessel family} with bound $B$.
\end{definition}

Despite the fact that the notions of a pure state and a coherent state appear to be unrelated, the solution of Kadison-Singer problem has brought these two concepts much closer together. This is due to the discretization problem, which was proposed and popularized by Ali, Antoine, and Gazeau \cite[Chapter 17]{aag2}. Is it possible to obtain a discrete frame by sampling a continuous frame? Implicitly, some additional hypothesis is needed on continuous frame such us boundedness
\begin{equation}\label{cf2}
||\phi_t||^2 \le N \qquad \text{for all } t\in X.
\end{equation}

A partial answer to the discretization problem was given by Fornasier and Rauhut, see \cite[Remarks 4 and 5]{FR}. This was done by  constructing Banach spaces associated to continuous frames using the coorbit space theory developed by Feichtinger and Gr\"ochenig \cite{FG1, FG2}. In \cite[Theorem 5]{FR} they provide a general method to derive Banach frames and atomic decompositions for these Banach spaces by sampling the continuous frame. This yields the solution of the discretization problem for localized continuous frames satisfying certain integrability condition. 
 
A complete answer to the discretization problem was given by Freeman and Speegle \cite{FS}. Their method uses in an essential way the solution of Weaver's conjecture, which was shown in the landmark paper of  Marcus, Spielman, and Srivastava \cite{MSS}. In turn, Weaver \cite{We} has shown earlier that his conjecture is equivalent to the Kadison-Singer problem. Hence, the solution of the Kadison-Singer problem about pure states has paved the way for solving the discretization problem in the area of coherent states.

The solution of the discretization problem by Freeman and Speegle \cite{FS} relies on a sampling theorem for scalable frames. Scalable frames have been introduced by Kutyniok, Okoudjou, Philipp, and Tuley \cite{kopt}. A scalable frame $\{\phi_i\}_{i \in I}$ is a collection of vectors in $\mathcal H$ for which there exists a sequence of scalars $\{a_i\}_{i\in I}$ such that $\{a_i \phi_i \}_{i\in I}$ is a (Parseval) frame for $\mathcal H$. The concept of scalable frame is closely related to weighted frames. It is not hard to show that every continuous frame can be sampled to obtain a scalable frame. A much more difficult part is proving a sampling theorem for scalable frames. This result relies heavily on the solution of Weaver's conjecture \cite{We}.

In addition, we will also present Lyapunov's theorem for continuous frames which was recently shown by the author \cite{Bow2}. Every continuous frame defines a positive operator-valued measure (POVM) on $X$, see \cite{MHC}. To any measurable subset $E\subset X$, we assign a partial frame operator $S_{\phi,E}$ given by
\[
S_{\phi,E} f  = \int_E \lan f, \phi_t \ran \phi_t d\mu(t) \qquad\text{for } f\in\mathcal H.
\]
These are also known in the literature as localization operators, see e.g. \cite{Da1, Da2} for specific settings.
If the measure space $X$ is non-atomic, then the closure of the range of such POVM is convex. This is a variant of the classical Lyapunov's theorem which states that the range of a non-atomic vector-valued measure with values in $\R^n$ is a convex and compact subset of $\R^n$. 

Akemann and Weaver \cite{AW} have recently shown Lyapunov-type theorem for discrete frames. This result was also made possible by the solution of the Kadison-Singer problem. In fact, it can be considered as a significant strengthening of Weaver's conjecture \cite{We}. In contrast to Lyapunov-type theorem of Akemann and Weaver, Lyapunov's theorem for continuous frames on non-atomic measure spaces does not rely on the solution of the Kadison-Singer problem.

The paper is organized as follows. In Section \ref{S2} we present Lyapunov's theorem for continuous frames. In Section \ref{S3} we explain Lyapunov's theorem of Akemann and Weaver. In Section \ref{S4} we outline the proof of a sampling theorem for scalable frames which is then used in showing a sampling theorem for continuous frames. Finally, in Section \ref{S5} we present examples illustrating discretization of continuous frames.

\section{Lyapunov's theorem for continuous frames} \label{S2}

In this section we present the proof of Lyapunov's theorem for continuous frames due to the author \cite{Bow2}. We start with a preliminary result about continuous frames which is a consequence of the fact that we work with separable Hilbert spaces. The lower frame bound assumption is not essential and all of our results in this section hold for continuous Bessel families. 

\begin{proposition}\label{p1} Suppose that $\{\phi_t\}_{t\in X}$ is a continuous Bessel family in a separable Hilbert space $\mathcal H$. Then:
\begin{enumerate}[(i)]
\item
the support
$
\{t\in X: \phi_t \ne 0\}
$
is a $\sigma$-finite subset of $X$,
\item
 $\phi: X\to \mathcal H$ is a.e. uniform limit of a sequence of countably-valued measurable functions.
\end{enumerate}
\end{proposition}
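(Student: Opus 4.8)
The plan is to reduce everything to two standard facts about separable Hilbert spaces, using only that $\{\phi_t\}_{t\in X}$ is weakly measurable (Definition \ref{cf}(i)) together with the Bessel bound. Fix a countable orthonormal basis $\{e_n\}_{n\in\N}$ of $\mathcal H$ and a countable dense subset $\{g_n\}_{n\in\N}\subset\mathcal H$. The crucial preliminary observation is a measurability statement: since each $t\mapsto\langle e_n,\phi_t\rangle$ is measurable, so is $t\mapsto\|\phi_t\|^2=\sum_n|\langle e_n,\phi_t\rangle|^2$ as a countable sum of non-negative measurable functions, and hence, for every fixed $g\in\mathcal H$, the function $t\mapsto\|\phi_t-g\|^2=\|\phi_t\|^2-2\,\mathrm{Re}\,\langle g,\phi_t\rangle+\|g\|^2$ is measurable as well.

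For (i) I would observe
\[
\{t\in X:\phi_t\ne 0\}=\{t\in X:\|\phi_t\|>0\}=\bigcup_{n,k\in\N}\Bigl\{t\in X:|\langle e_n,\phi_t\rangle|\ge\tfrac1k\Bigr\},
\]
and then bound the measure of each piece by Chebyshev's inequality: $\mu\bigl(\{t:|\langle e_n,\phi_t\rangle|\ge 1/k\}\bigr)\le k^2\int_X|\langle e_n,\phi_t\rangle|^2\,d\mu(t)\le k^2B<\infty$, using the upper bound in \eqref{cf1} applied to $f=e_n$. A countable union of finite-measure sets is $\sigma$-finite, which is exactly (i).

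For (ii), the statement is a special case of the Pettis measurability theorem, which I would reprove directly. Given $m\in\N$, the sets $A^m_n:=\{t\in X:\|\phi_t-g_n\|<1/m\}$ are measurable by the preliminary observation, and $\bigcup_n A^m_n=X$ since $\{g_n\}$ is dense in $\mathcal H$ (including near $0$). Disjointifying, $B^m_1:=A^m_1$ and $B^m_n:=A^m_n\setminus\bigcup_{j<n}A^m_j$, one obtains a countably-valued measurable function $\psi_m:=\sum_n g_n\ch_{B^m_n}$ satisfying $\|\phi_t-\psi_m(t)\|<1/m$ for all $t\in X$. Hence $\psi_m\to\phi$ uniformly on $X$, which proves (ii), indeed with ``a.e.'' replaced by ``everywhere''.

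I do not expect a genuine obstacle here; the content is the standard interplay between weak and strong measurability in separable spaces. The one place deserving attention is the preliminary measurability claim for $t\mapsto\|\phi_t-g\|$ — this is precisely where separability of $\mathcal H$ is used, as it lets the norm be recovered from countably many weak evaluations — and it is worth noting that part (ii) does not actually use the Bessel bound at all, only hypothesis (i) of Definition \ref{cf}; the Bessel bound is needed solely for the $\sigma$-finiteness in (i).
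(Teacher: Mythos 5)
Your proof is correct. Part (i) is essentially the paper's argument: Chebyshev's inequality applied to the Bessel bound with $f=e_n$, followed by writing the support as a countable union of finite-measure sets. For part (ii), however, you take a genuinely different and more self-contained route. The paper invokes the Pettis Measurability Theorem to pass from weak to strong (Bochner) measurability on the $\sigma$-finite support furnished by (i), and then cites the Diestel--Uhl corollary that a strongly measurable function is the a.e.\ uniform limit of countably-valued measurable functions, together with a remark that this corollary extends from finite to $\sigma$-finite measure spaces. You instead reprove the needed special case directly: separability gives measurability of $t\mapsto\|\phi_t-g\|^2$ (norm recovered from the countable ONB, cross term from weak measurability), and then disjointifying the sets $\{t:\|\phi_t-g_n\|<1/m\}$ over a countable dense set $\{g_n\}$ produces countably-valued measurable $\psi_m$ with $\|\phi_t-\psi_m(t)\|<1/m$ everywhere. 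This buys you three things: no reliance on the external references, no need for the $\sigma$-finiteness of the support (so (ii) is independent of the Bessel bound, as you correctly note), and uniform approximation everywhere rather than a.e.; the paper's citation route, on the other hand, is shorter on the page and records the general equivalence of weak and strong measurability, which is the standard context a reader may want. Either argument fully supports the later use in Lemma \ref{approx}.
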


\begin{proof}
Let $\{e_i\}_{i\in I}$ be an orthonormal basis of $\mathcal H$, where the index set $I$ is at most countable. For any $n\in \N$ and $i\in I$, by Chebyshev's inequality \eqref{cf1} yields
\[
\mu(\{t\in X: |\langle e_i, \phi_t \rangle|^2> 1/n \}) \le Bn<\infty.
\]
Hence, the set 
\[
\{t\in X: \phi_t \ne 0\}= \bigcup_{i\in I} \bigcup_{n\in\N} \{t\in X: |\langle e_i, \phi_t \rangle|^2> 1/n \} 
\]
is a countable union of sets of finite measure. This shows (i).

Since $\mathcal H$ is separable, by the Pettis Measurability Theorem \cite[Theorem II.2]{DU}, the weak measurability in Definition \ref{cf}(i) is equivalent to (Bochner) strong measurability on $\sigma$-finite measure spaces $X$. That is, $t \mapsto \phi_t$ is a pointwise a.e. limit of a sequence of simple measurable functions. Moreover, by \cite[Corollary II.3]{DU}, every measurable function $\phi: X\to \mathcal H$ is a.e. uniform limit of a sequence of countably-valued measurable functions. Although this result was stated in \cite{DU} for finite measure spaces, it also holds for $\sigma$-finite measure spaces. Since the support of $\{\phi_t\}_{t\in X}$ is $\sigma$-finite, we deduce (ii).
\end{proof}

It is convenient to define a concept of weighted frame operator as follows. This is a special case of a continuous frame multiplier introduced by Balazs, Bayer, and Rahimi \cite{BBR}; for a discrete analogue, see \cite{Ba}.

\begin{definition}
Suppose that $\{\phi_t\}_{t\in X}$ is a continuous Bessel family.
For any measurable function $\tau: X \to [0,1]$, 
define a {\it weighted frame operator}
\[
S_{\sqrt{\tau}\phi,X} f= \int_X \tau(t) \langle f, \phi_t \rangle \phi_t d\mu(t)
\qquad f\in \mathcal H.
\]
\end{definition}

Observe that
\[
\begin{aligned}
 \int_X |\langle f, \sqrt{\tau(t)} \phi_t \rangle|^2 d\mu (t) 
&  =  \int_X \tau (t) |\langle f,\phi_t \rangle|^2 d\mu (t) \\
& \le  \int_X |\langle f, \phi_t \rangle|^2 d\mu (t) 
 \le B ||f||^2.
\end{aligned}
\]
Hence, $\{\sqrt{\tau(t)} \phi_t\}_{t\in X}$ is a continuous Bessel family with the same bound as $\{\phi_t\}_{t\in X}$ and a weighted frame operator is merely the usual frame operator associated to $\{\sqrt{\tau(t)} \phi_t\}_{t\in X}$.
Using Proposition \ref{p1} we will deduce the following approximation result for continuous frames.

\begin{lemma}\label{approx}
Let $(X,\mu)$ be a measure space and let $\mathcal H$ be a separable Hilbert space.
Suppose that $\{\phi_t\}_{t\in X}$ is a continuous Bessel family in $\mathcal H$. Then for every $\ve>0$, there exists a continuous Bessel family $\{\psi_t\}_{t\in X}$, which takes only countably many values, such that:
\begin{enumerate}[(i)]
\item
there exists a partition $\{X_n\}_{n\in \N}$ of $X$ into measurable sets and a sequence $\{t_n\}_{n\in\N} \subset X$, such that $t_n \in X_n$ and
\begin{equation}\label{ap1}
\psi_t = \phi_{t_n} \qquad\text{for a.e. }t \in X_n, \ n\in\N,
\end{equation}
\item
for any measurable function $\tau: X \to [0,1]$ we have
\begin{equation}\label{ap2}
||S_{\sqrt{\tau}\phi,X} - S_{\sqrt{\tau}\psi,X}||<\ve.
\end{equation}
\end{enumerate}
\end{lemma}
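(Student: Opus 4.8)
The plan is to obtain $\{\psi_t\}$ by sampling $\{\phi_t\}$ along a partition that is ``fine'' not in a pointwise sense but in mean square. Set $\eta_t := \phi_t - \psi_t$; I will arrange the partition $\{X_n\}$ and the points $t_n$ so that
\begin{equation}\label{eta-small}
\int_X \|\eta_t\|^2\, d\mu(t) < \ve',
\end{equation}
where $\ve' = \ve'(\ve,B) > 0$ will be fixed at the end. To see that \eqref{eta-small} does the job, I expand (all integrals below being understood in the weak sense, as in the definition of $S_{\phi,E}$): for $f,g\in\mathcal H$,
\[
\langle (S_{\sqrt{\tau}\phi,X} - S_{\sqrt{\tau}\psi,X})f,g\rangle = \int_X \tau(t)\big(\langle f,\eta_t\rangle\langle \phi_t,g\rangle + \langle f,\psi_t\rangle\langle \eta_t,g\rangle\big)\, d\mu(t).
\]
Using $0\le\tau\le 1$, the Cauchy--Schwarz inequality in $L^2(X,\mu)$, the pointwise bound $|\langle h,\eta_t\rangle|\le\|h\|\,\|\eta_t\|$ together with \eqref{eta-small}, and the Bessel estimate $\int_X|\langle g,\phi_t\rangle|^2 d\mu\le B\|g\|^2$, this gives
\[
|\langle (S_{\sqrt{\tau}\phi,X} - S_{\sqrt{\tau}\psi,X})f,g\rangle| \le \big(\sqrt B + \sqrt{B_\psi}\big)\sqrt{\ve'}\,\|f\|\,\|g\|,
\]
where $B_\psi$ is any Bessel bound for $\{\psi_t\}$; since $\int_X|\langle f,\psi_t\rangle|^2 d\mu\le 2\int_X|\langle f,\phi_t\rangle|^2 d\mu + 2\int_X|\langle f,\eta_t\rangle|^2 d\mu\le 2(B+\ve')\|f\|^2$, the family $\{\psi_t\}$ is indeed a continuous Bessel family and one may take $B_\psi = 2(B+\ve')$. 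Hence $\|S_{\sqrt{\tau}\phi,X} - S_{\sqrt{\tau}\psi,X}\| \le (\sqrt B + \sqrt{2(B+\ve')})\sqrt{\ve'}$ uniformly in $\tau$, and it suffices to fix $\ve'$ so small that the right-hand side is $<\ve$; this yields \eqref{ap2}.

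For the construction, by Proposition~\ref{p1}(i) the set $\{t : \phi_t\neq 0\}$ is $\sigma$-finite, so write $X = X_0\sqcup\bigsqcup_{k\in\N} Z_k$ with $\phi_t = 0$ on $X_0$ and the $Z_k$ pairwise disjoint of finite measure, and choose $\delta_k > 0$ with $\delta_k^2\,\mu(Z_k) < 2^{-k}\ve'$. By Proposition~\ref{p1}(ii), for each $k$ there is a countably-valued measurable function $\psi^{(k)} = \sum_m v^{(k)}_m\,\mathbf 1_{Y^{(k)}_m}$ with $\|\phi_t - \psi^{(k)}_t\| < \delta_k/2$ for a.e.\ $t$. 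The collection $\{X_0\}\cup\{Z_k\cap Y^{(k)}_m\}_{k,m}$ is a countable measurable partition of $X$; relabel its members as $\{X_n\}$, discarding empty ones. On a piece $X_n\subseteq Z_k\cap Y^{(k)}_m$ with $\mu(X_n)>0$ the inequality $\|\phi_t - v^{(k)}_m\| < \delta_k/2$ holds for a.e.\ $t\in X_n$, so pick $t_n\in X_n$ with $\|\phi_{t_n} - v^{(k)}_m\| < \delta_k/2$; on a null piece, or on $X_0$, pick any $t_n$ in that piece (note $\phi_{t_n} = 0$ when the piece is $X_0$). Define $\psi_t := \phi_{t_n}$ for $t\in X_n$, which is \eqref{ap1}. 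Then $\psi$ is countably-valued and measurable, and for a.e.\ $t\in X_n\subseteq Z_k$,
\[
\|\eta_t\| = \|\phi_t - \phi_{t_n}\| \le \|\phi_t - v^{(k)}_m\| + \|v^{(k)}_m - \phi_{t_n}\| < \delta_k ,
\]
so $\int_X\|\eta_t\|^2 d\mu \le \sum_k\delta_k^2\mu(Z_k) < \ve'$, establishing \eqref{eta-small}.

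The one genuinely structural point — and the source of whatever difficulty there is — is that pointwise closeness of $\psi$ to $\phi$ is \emph{not} enough when $\mu(X) = \infty$: the operator norm $\|S_{\sqrt{\tau}\phi,X} - S_{\sqrt{\tau}\psi,X}\|$ is controlled by $\int_X\|\eta_t\|^2 d\mu$ rather than by $\sup_t\|\eta_t\|$, so a single uniform approximation does not suffice and one must let the oscillation threshold $\delta_k$ decay along a finite-measure exhaustion of the ($\sigma$-finite) support. Everything else — the algebraic identity above, the choice of sample points $t_n$ so that the values of $\psi$ are genuine frame vectors $\phi_{t_n}$, and the verification that $\{\psi_t\}$ is Bessel — is routine.
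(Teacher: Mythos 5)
Your proof is correct, and it reaches the conclusion by a genuinely different error analysis than the paper, although the skeleton (sample $\phi$ on a countable measurable partition supplied by Proposition \ref{p1}) is the same. The paper stratifies the support into dyadic shells $Y_n=\{2^{n-1}\le\|\phi_t\|<2^n\}$, cuts each shell into pieces of measure at most $1$ (treating atoms separately, since such pieces need not exist on atoms), and builds $\psi$ with the weighted pointwise bound $\|\psi_t-\phi_t\|\le \ve/(4^n2^m)$ on $Y_{n,m}$; the operator estimate is then the direct quadratic-form bound $\bigl||\langle f,\psi_t\rangle|^2-|\langle f,\phi_t\rangle|^2\bigr|\le\|\psi_t-\phi_t\|(\|\psi_t\|+\|\phi_t\|)$, summed over the shells, giving $\sup_{\|f\|=1}\int_X\bigl||\langle f,\psi_t\rangle|^2-|\langle f,\phi_t\rangle|^2\bigr|\,d\mu\le 6\ve$ and hence the norm bound by self-adjointness, with a constant independent of the Bessel bound $B$. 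You instead require only the mean-square smallness $\int_X\|\phi_t-\psi_t\|^2\,d\mu<\ve'$, achieved by letting the uniform threshold $\delta_k$ decay along any finite-measure exhaustion of the $\sigma$-finite support, and you convert this into the operator bound via the bilinear splitting $\langle f,\phi_t\rangle\langle\phi_t,g\rangle-\langle f,\psi_t\rangle\langle\psi_t,g\rangle=\langle f,\eta_t\rangle\langle\phi_t,g\rangle+\langle f,\psi_t\rangle\langle\eta_t,g\rangle$ together with Cauchy--Schwarz in $L^2(\mu)$ and the Bessel bounds of $\phi$ and (derived) of $\psi$. This buys you a cleaner construction: no norm stratification and no atomic/non-atomic case distinction are needed, since the Bessel bound absorbs the size of $\phi_t$ that the paper controls pointwise by $2^n$; the price is a final constant of the form $(\sqrt{B}+\sqrt{2(B+\ve')})\sqrt{\ve'}$ depending on $B$, which is harmless because $\ve'$ is chosen last, whereas the paper's argument gives the slightly stronger, $B$-free ``integrated'' estimate. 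Your choices of $t_n$ (from the a.e.\ set on pieces of positive measure, arbitrary on null pieces and on the zero set $X_0$, where $\phi_{t_n}=0$) are handled correctly, so both (i) and (ii) hold as stated.
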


\begin{proof}
By Proposition \ref{p1}(i) we can assume that $(X,\mu)$ is $\sigma$-finite and $\phi_t \ne 0$ for all $t\in X$. Then the measure space $X$ can be decomposed into its atomic $X_{at}$ and non-atomic $X \setminus X_{at}$ parts. Since $X$ is $\sigma$-finite, it has at most countably many atoms. Since every measurable mapping is constant a.e. on atoms, we can take $\psi_t = \phi_t$ for all $t\in X_{at}$, and the conclusions (i) and (ii) hold on $X_{at}$. Therefore, without loss of generality can assume that $\mu$ is a non-atomic measure.

Define measurable sets $Y_0=\{t\in X: ||\phi_t||<1\}$ and
\[
Y_n= \{t\in X: 2^{n-1} \le ||\phi_t||< 2^n \}, \qquad n\ge 1.
\]
Then, for any $\ve>0$, we can find a partition $\{Y_{n,m}\}_{m\in \N}$ of each $Y_n$ such that $\mu(Y_{n,m}) \le 1$ for all $m\in\N$. By Proposition \ref{p1}(ii) applied to each family $\{\phi_t \}_{t\in Y_{n,m}}$, we can find a countably-valued measurable function $\{ \tilde \psi_t\}_{t\in Y_{n,m}}$ such that 
\begin{equation}\label{ss1}
||\tilde \psi_t - \phi_t || \le \frac{\ve}{4^n 2^{m+1}} \qquad\text{for a.e. }t\in Y_{n,m}.
\end{equation}
Since $\{Y_{n,m}\}_{n\in \N_0, m\in \N}$ is a partition of $X$, we obtain a global countably-valued  function $\{\tilde \psi_t\}_{t\in X}$ satisfying \eqref{ss1}. Thus, we can partition $X$ into countable family of measurable sets $\{X_k\}_{k\in\N}$ such that $\{\tilde \psi_t\}_{t\in X}$ is constant on each $X_k$. Moreover,we can also require that $\{X_k\}_{k\in \N}$ is a refinement of a partition $\{Y_{n,m}\}_{n\in \N_0, m\in \N}$.

For a fixed $k\in \N$,  take $n$ and $m$ such that $X_k \subset Y_{n,m}$. Choose $t_k \in X_k$ for which \eqref{ss1} holds. Define a countably-valued function $\{\psi_t\}_{t\in X}$ by
\[
\psi_t = \phi_{t_k} \qquad \text{for }t \in X_k, \ k\in \N.
\]
Thus, the conclusion (i) follows by the construction.

Now fix $n\in \N_0$ and $m\in \N$, and take any $t\in Y_{n,m}$ outside the exceptional set in \eqref{ss1}. Let $k\in \N$ be such that $t\in X_k$. By \eqref{ss1},
\[
||\psi_t - \phi_t || = ||\phi_{t_k} - \phi_{t}|| \le ||\phi_{t_k} - \tilde \psi_{t_k}||+ ||\tilde \psi_t - \phi_t|| \le 2 \frac{\ve}{4^n 2^{m+1}}.
\]
Thus,
\begin{equation}\label{ss2}
||\psi_t - \phi_t || \le \frac{\ve}{4^n 2^m} \qquad\text{for a.e. }t\in Y_{n,m}.
\end{equation}

Take any $f\in\mathcal H$ with $||f||=1$.
Then, for a.e. $t\in Y_{n,m}$,
\[
\begin{aligned}
||\langle f, \psi_t \rangle|^2 & - |\langle f, \phi_t \rangle|^2|
 = |\langle f, \psi_t-\phi_t \rangle||\langle f, \psi_t +\phi_t \rangle| 
\\
&\le ||\psi_t - \phi_t||(||\psi_t||+||\phi_t||)
\le \frac{\ve}{4^n 2^m}(2^n+\ve + 2^n) \le \frac{3\ve}{2^n2^m} .
\end{aligned}
\]
Integrating over $Y_{n,m}$ and summing over $n\in\N_0$ and $m\in\N$ yields
\[
\int_X ||\langle f, \psi_t \rangle|^2-|\langle f, \phi_t \rangle|^2 | d\mu(t)  \le  \sum_{n=0}^\infty\sum_{m=1}^\infty \frac{3\ve}{2^n2^m} \mu(Y_{n,m}) \le 6\ve.
\]
Using the fact that $S_{\sqrt{\tau}\phi,X}$ is self-adjoint, we have
\[
\begin{aligned}
||S_{\sqrt{\tau}\phi,X} - S_{\sqrt{\tau}\psi,X}||
& = \sup_{||f||=1} |\langle (S_{\sqrt{\tau}\phi,X} - S_{\sqrt{\tau}\psi,X})f,f \rangle |
\\
&= \sup_{||f||=1} \bigg| \int_X \tau(t) (|\langle f, \psi_t \rangle|^2-|\langle f, \phi_t \rangle|^2) d\mu(t)  \bigg| \le 6\ve.
\end{aligned}
\]
Since $\ve>0$ is arbitrary, this completes the proof.
\end{proof}

\begin{remark}\label{rem}
Suppose $\{\psi_t\}_{t\in X}$ is a continuous frame which takes only countably many values as in Lemma \ref{approx}. Then for practical purposes, such a frame can be treated as a discrete frame. Indeed, there exists a partition $\{X_n\}_{n\in\N}$ of $X$  and a sequence $\{t_n\}_{n\in \N}$ such that \eqref{ap1} holds.
Since $\{\psi_t\}_{t\in X}$ is Bessel, we have $\mu(X_n)<\infty$ for all $n$ such that $\phi_{t_n} \ne 0$. Define vectors
\[
\tilde \phi_n =\sqrt{\mu(X_n)}\phi_{t_n} \qquad  n\in\N.
\]
Then, for all $f\in \mathcal H$,
\begin{equation}\label{rem3}
\int_X |\langle f, \psi_t \rangle|^2 d\mu (t) = \sum_{n\in \N} \int_{X_n} |\langle f, \phi_{t_n} \rangle|^2 d\mu (t) = \sum_{n\in \N} |\langle f, \tilde \phi_n \rangle|^2.
\end{equation}
Hence, $\{ \tilde  \phi_n \}_{n\in\N}$ is a discrete frame and its frame operator coincides with that of a continuous frame $\{\psi_t\}_{t\in X}$. This observation will be used in a subsequent theorem and also in Section \ref{S4}.
\end{remark}

\begin{theorem}\label{lya}
Let $(X,\mu)$ be a non-atomic  measure space.
Suppose that $\{\phi_t\}_{t\in X}$ is a continuous Bessel family in $\mathcal H$.  For any measurable function $\tau: X \to [0,1]$, 
consider a weighted frame operator
\[
S_{\sqrt{\tau}\phi,X} f= \int_X \tau(t) \langle f, \phi_t \rangle \phi_t d\mu(t)
\qquad f\in \mathcal H.
\]
Then, for any $\ve>0$, there exists a measurable set $E \subset X$ such that
\begin{equation}\label{lya1}
||S_{\phi,E} - S_{\sqrt{\tau}\phi,X}||<\ve.
\end{equation}
\end{theorem}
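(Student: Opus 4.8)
The plan is to reduce to the essentially discrete situation furnished by Lemma~\ref{approx} and then to exploit the classical fact that a finite non-atomic measure attains every intermediate value. Fix $\ve>0$ and apply Lemma~\ref{approx} with $\ve/2$ in place of $\ve$; this produces a continuous Bessel family $\{\psi_t\}_{t\in X}$ taking only countably many values, a partition $\{X_n\}_{n\in\N}$ of $X$, and points $t_n\in X_n$ with $\psi_t=\phi_{t_n}$ for a.e.\ $t\in X_n$, and it guarantees
\[
\|S_{\sqrt{\sigma}\phi,X}-S_{\sqrt{\sigma}\psi,X}\|<\ve/2
\qquad\text{for every measurable }\sigma\colon X\to[0,1].
\]
I would use this bound twice: once with $\sigma=\tau$, and once with $\sigma=\mathbf 1_E$ for the set $E$ constructed below, observing that $S_{\sqrt{\mathbf 1_E}\phi,X}=S_{\phi,E}$ and $S_{\sqrt{\mathbf 1_E}\psi,X}=S_{\psi,E}$.

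Next I would build $E$ piece by piece along the partition. For each $n$ with $\phi_{t_n}=0$ put $E_n=\emptyset$; for each $n$ with $\phi_{t_n}\ne 0$, the Bessel property forces $\mu(X_n)<\infty$ (as in Remark~\ref{rem}), so since $\mu|_{X_n}$ is non-atomic and $0\le\int_{X_n}\tau\,d\mu\le\mu(X_n)$, the Sierpi\'nski intermediate value theorem for non-atomic measures yields a measurable $E_n\subset X_n$ with $\mu(E_n)=\int_{X_n}\tau\,d\mu$. Set $E=\bigcup_n E_n$. Because $\psi$ is constant equal to $\phi_{t_n}$ on $X_n$, one has $S_{\psi,E\cap X_n}=\mu(E_n)\,\langle\,\cdot\,,\phi_{t_n}\rangle\phi_{t_n}$ while $S_{\sqrt{\tau}\psi,X_n}=\big(\int_{X_n}\tau\,d\mu\big)\,\langle\,\cdot\,,\phi_{t_n}\rangle\phi_{t_n}$, and these agree by the choice of $E_n$; summing the Bochner integrals over the partition gives $S_{\psi,E}=S_{\sqrt{\tau}\psi,X}$.

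The theorem then follows from the triangle inequality
\[
\|S_{\phi,E}-S_{\sqrt{\tau}\phi,X}\|\le\|S_{\phi,E}-S_{\psi,E}\|+\|S_{\psi,E}-S_{\sqrt{\tau}\psi,X}\|+\|S_{\sqrt{\tau}\psi,X}-S_{\sqrt{\tau}\phi,X}\|,
\]
in which the middle term is $0$ by the previous step, the last term is $<\ve/2$ by the instance $\sigma=\tau$, and the first term equals $\|S_{\sqrt{\mathbf 1_E}\phi,X}-S_{\sqrt{\mathbf 1_E}\psi,X}\|<\ve/2$ by the instance $\sigma=\mathbf 1_E$.

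The substantive work has already been absorbed into Lemma~\ref{approx} (and Proposition~\ref{p1} behind it); what remains is conceptual rather than computational, namely that on each non-atomic piece where the frame vectors are constant, replacing the weighted mass $\int_{X_n}\tau\,d\mu$ by the exact measure $\mu(E_n)$ of a subset ``convexifies'' the localized frame operator --- this is the one-dimensional Lyapunov (Sierpi\'nski) phenomenon. The only points demanding care are the bookkeeping with $\sigma$-finiteness and with the pieces where $\phi_{t_n}=0$, and --- the cleanest trick --- applying Lemma~\ref{approx}(ii) with the \emph{measurable} weight $\tau=\mathbf 1_E$ to absorb the discrepancy between $\phi$ and $\psi$, so that no pointwise estimate on $\|\psi_t-\phi_t\|$ is needed at this stage.
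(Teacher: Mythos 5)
Your proposal is correct and follows essentially the same route as the paper's proof: approximate $\phi$ by the countably-valued family $\psi$ from Lemma~\ref{approx}, use non-atomicity to choose $E_n\subset X_n$ with $\mu(E_n)=\int_{X_n}\tau\,d\mu$ so that $S_{\psi,E}=S_{\sqrt{\tau}\psi,X}$ exactly, and absorb both discrepancies between $\phi$ and $\psi$ via \eqref{ap2} applied to $\tau$ and to $\ch_E$. Your explicit handling of the pieces with $\phi_{t_n}=0$ and the finiteness of $\mu(X_n)$ matches the paper's use of Remark~\ref{rem}, so there is nothing to correct.
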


\begin{proof}
Let $\{\psi_t\}_{t\in X}$ be a continuous Bessel family as in Lemma \ref{approx}. Thus, there exists a partition $\{X_n\}_{n\in \N}$ of $X$ into measurable sets and a sequence $\{t_n\}_{n\in\N} \subset X$, such that $t_n \in X_n$ and \eqref{ap1} holds.
Since $\{\psi_t\}_{t\in X}$ is Bessel, we have $\mu(X_n)<\infty$ for all $n$ such that $\phi_{t_n} \ne 0$.
By Remark \ref{rem} the continuous frame $\{\psi_t\}_{t\in X}$ is equivalent to a discrete frame 
\[
\{\tilde \phi_n =\sqrt{\mu(X_n)}\phi_{t_n} \}_{n\in\N}.
\]
More precisely, for any measurable function $\tau: X \to [0,1]$, the frame operator $S_{\sqrt{\tau}\psi,X}$ of a continuous Bessel family $\{\sqrt{\tau(t)}\psi_t\}_{t\in X}$ coincides with the frame operator of a discrete Bessel sequence
\begin{equation}\label{lya4}
\{\sqrt{\tau_n} \phi_{t_n}\}_{n\in \N} \qquad\text{where }
 \tau_n=\int_{X_n}\tau(t) d\mu(t).
\end{equation}
Indeed, for all $f\in\mathcal H$,
\begin{equation}\label{lya6}
\begin{aligned}
\int_X |\langle f, \sqrt{\tau(t)} \psi_t \rangle|^2 d\mu (t)& = \sum_{n\in \N} \int_{X_n} \tau(t) |\langle f, \psi_t \rangle|^2 d\mu (t) \\
&=  \sum_{n\in \N} \tau_n  |\langle f,  \phi_{t_n} \rangle|^2 = \sum_{n\in \N} |\langle f, \sqrt{\tau_n} \phi_{t_n} \rangle|^2.
\end{aligned}
\end{equation}

Since $\mu$ is non-atomic, we can find subsets $E_n \subset X_n$ be such that $\mu(E_n)=\tau_n$. Define $E= \bigcup_{n\in \N} E_n$. 
Then, a simple calculation shows that
\begin{equation}\label{lya8}
S_{\psi,E}= S_{\sqrt{\tau}\psi,X}.
\end{equation}
Indeed, by \eqref{lya6}
\[
\langle S_{\sqrt{\tau}\psi,X}f,f \rangle = \sum_{n\in \N} \tau_n  |\langle f, \phi_{t_n} \rangle|^2 = \sum_{n\in \N} \int_{E_n} |\langle f, \psi_t \rangle|^2 d\mu (t)=
\langle S_{\psi,E} f,f \rangle.
\]
Hence, by \eqref{ap2} and \eqref{lya8}
\[
||S_{\phi,E} - S_{\sqrt{\tau}\phi,X}|| 
\le
||S_{\phi,E} - S_{\psi,E}||+||S_{\sqrt{\tau}\psi,X} - S_{\sqrt{\tau}\phi,X}||
 \le 2\ve.
\]
Since $\ve>0$ is arbitrary, this shows \eqref{lya1}. 
\end{proof}

Theorem \ref{lya} implies the Lyapunov theorem for continuous frames. Theorem \ref{lyu} is in a spirit of Uhl's theorem \cite{Uh}, which gives sufficient conditions for the convexity of the closure of the range of a non-atomic vector-valued measure, see also \cite[Theorem IX.10]{DU}. Note that the positive operator valued measure (POVM), which is given by $E \mapsto S_{\phi, E}$, does not have to be of bounded variation. Hence, Theorem \ref{lyu} can not be deduced from Uhl's theorem.

\begin{theorem}\label{lyu}
Let $(X,\mu)$ be a non-atomic measure space.
Suppose that $\{\phi_t\}_{t\in X}$ is a continuous Bessel family in $\mathcal H$. Let $\mathcal S$ be the set of all partial frame operators 
\begin{equation}\label{lyu1}
\mathcal S=\{ S_{\phi, E}: E \subset X \text{ is measurable} \}
\end{equation}
Then, the operator norm closure $\overline{\mathcal S} \subset \mathcal B(\mathcal H)$ is convex.
\end{theorem}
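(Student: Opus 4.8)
The plan is to reduce the convexity of $\overline{\mathcal S}$ entirely to Theorem \ref{lya} by a judicious choice of the weight function $\tau$. The starting observation is that a convex combination of two \emph{partial} frame operators is itself a weighted frame operator. Indeed, given measurable sets $E_1, E_2 \subset X$ and $\lambda \in [0,1]$, put $\tau = \lambda \ch_{E_1} + (1-\lambda)\ch_{E_2}$, which is a measurable function $X \to [0,1]$. Then for every $f \in \mathcal H$,
\[
S_{\sqrt{\tau}\phi, X} f = \int_X \tau(t)\langle f,\phi_t\rangle \phi_t\, d\mu(t) = \lambda \, S_{\phi, E_1} f + (1-\lambda)\, S_{\phi, E_2} f,
\]
so that $S_{\sqrt{\tau}\phi,X} = \lambda S_{\phi,E_1} + (1-\lambda) S_{\phi,E_2}$ as operators on $\mathcal H$.

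Next I would invoke Theorem \ref{lya} with this particular $\tau$: for every $\ve>0$ there is a measurable $E \subset X$ with $\|S_{\phi,E} - S_{\sqrt{\tau}\phi,X}\| < \ve$. Since $S_{\phi,E} \in \mathcal S$ and $\ve>0$ is arbitrary, this shows $\lambda S_{\phi,E_1} + (1-\lambda) S_{\phi,E_2} \in \overline{\mathcal S}$. Hence every convex combination of two elements of $\mathcal S$ already lies in $\overline{\mathcal S}$. To upgrade this to convexity of $\overline{\mathcal S}$ itself, take $S,T\in\overline{\mathcal S}$ and $\lambda\in[0,1]$, choose $S_n,T_n\in\mathcal S$ with $S_n\to S$ and $T_n\to T$ in operator norm, note $\lambda S_n + (1-\lambda)T_n\in\overline{\mathcal S}$ by the previous step, and pass to the limit using that $\overline{\mathcal S}$ is closed; this gives $\lambda S + (1-\lambda)T\in\overline{\mathcal S}$.

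I do not expect a genuine obstacle here: the entire analytic difficulty has been absorbed into Theorem \ref{lya}, and the only real idea is to recognize that the convex combination $\lambda\ch_{E_1}+(1-\lambda)\ch_{E_2}$ of indicator functions is an admissible weight $\tau\colon X\to[0,1]$, which converts the convexity question into the already-established approximation statement. The closing limiting argument is standard because $\mathcal B(\mathcal H)$ is a metric space under the operator norm, so sequences suffice and no appeal to nets is needed.
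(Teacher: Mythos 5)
Your proof is correct and follows essentially the same route as the paper: both arguments rest on the observation that a convex combination of indicator weights is an admissible weight $\tau\colon X\to[0,1]$, so that Theorem \ref{lya} lets one approximate the resulting weighted frame operator by a partial frame operator, after which a standard closure argument finishes the proof. The paper merely packages this by noting that the set $\mathcal T$ of all weighted frame operators is convex and that $\overline{\mathcal S}=\overline{\mathcal T}$, which is the same idea in slightly more abstract form.
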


\begin{proof}
Note that set
\[
\mathcal T= \{S_{\sqrt{\tau}\phi,X}: \tau \text{ is any measurable }X \to [0,1]\}
\]
is a convex subset of $\mathcal B(\mathcal H)$. Hence, its operator norm closure $\overline{\mathcal T}$ is also convex. If $\tau=\ch_E$ is a characteristic function on $E \subset X$, then $S_{\sqrt{\tau}\phi,X}=S_{\phi, E}$. Hence, $\mathcal S \subset \mathcal T$. By Theorem \ref{lya} their closures are the same $\overline{\mathcal S}=\overline{\mathcal T}$.
\end{proof}

Theorem \ref{lyu} can be extended to POVMs given by measurable mappings with values in positive compact operators.

\begin{definition}\label{cov}
Let $\mathcal K_+(\mathcal H)$ be the space of positive compact operators on a separable Hilbert space $\mathcal H$. Let $(X, \mu)$ be a measure space. We say that $T = \{T_t\}_{t\in X}: X \to \mathcal K_+(\mathcal H)$ is {\it compact operator-valued Bessel family} if:
\begin{enumerate}[(i)]
\item 
for each $f,g \in\mathcal H$, the function $X \ni t \mapsto \lan T_t f, g \ran \in \C$ is measurable, and 
\item
there exists a constant $B>0$ such that
\[
\int_X \lan T_t f,f \ran \le B ||f||^2 \qquad \text{for all }f\in \mathcal H.
\]
\end{enumerate}
\end{definition}

For $\phi \in \mathcal H$, let $\phi\otimes\phi$ denote a rank one operator given by
\[
(\phi\otimes\phi)(f) = \langle f, \phi \rangle \phi 
\qquad\text{for }f\in\mathcal H.
\]
Observe that if $\{\phi_t\}_{t\in X}$ is a continuous Bessel family, then $T_t=\phi_t \otimes \phi_t$ is an example of compact operator-valued Bessel family. This corresponds to rank 1 operator-valued mappings. Since finite rank operators are a dense subset of $\mathcal K_+(\mathcal H)$ with respect to the operator nom, the space $\mathcal K_+(\mathcal H)$ is separable. It turns out that Theorem \ref{lyu} also holds in a more general setting. The proof is an adaption of the above arguments and can be found in \cite{Bow2}.

\begin{theorem}\label{cov8}
Suppose that $\{T_t\}_{t\in X}$ is a compact operator-valued Bessel family over a non-atomic measure space $(X,\mu)$. Define a positive operator-valued measure $\Phi$ on $X$ by
\begin{equation}\label{cov8a}
\Phi(E) = \int_E T_t d\mu(t) \qquad\text{for measurable } E \subset X.
\end{equation}
Then, the closure of the range of $\Phi$ is convex.
\end{theorem}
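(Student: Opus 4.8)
The plan is to follow the route through Lemma~\ref{approx} and Theorems~\ref{lya}, \ref{lyu}, replacing the rank-one operators $\phi_t\otimes\phi_t$ by the compact operators $T_t$ throughout. For a measurable $\tau:X\to[0,1]$ define a weighted positive operator-valued measure $\Phi_\tau$ through the bounded positive quadratic form $\lan \Phi_\tau f,f\ran = \int_X \tau(t)\lan T_t f,f\ran\,d\mu(t)$; when $\tau=\ch_E$ this is exactly $\Phi(E)$, so the range $\mathcal S=\{\Phi(E):E\subset X\text{ measurable}\}$ is contained in $\mathcal T=\{\Phi_\tau:\tau:X\to[0,1]\text{ measurable}\}$. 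The set $\mathcal T$ is convex because $\lambda\tau_1+(1-\lambda)\tau_2$ again maps into $[0,1]$, hence $\overline{\mathcal T}$ is convex. Therefore it suffices to prove $\mathcal T\subset\overline{\mathcal S}$, i.e.\ that every $\Phi_\tau$ is an operator-norm limit of operators of the form $\Phi(E)$; this gives $\overline{\mathcal S}=\overline{\mathcal T}$, and the latter is convex. Since the range of $\Phi$ is precisely $\mathcal S$, this proves the theorem.

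The first step toward $\mathcal T\subset\overline{\mathcal S}$ is a compact-operator-valued analogue of Proposition~\ref{p1} and Lemma~\ref{approx}. Fixing an orthonormal basis $\{e_i\}_{i\in I}$ of $\mathcal H$ and applying Chebyshev's inequality to $t\mapsto\lan T_t e_i,e_i\ran$ exactly as in Proposition~\ref{p1}(i), the support $\{t\in X:T_t\ne 0\}$ is $\sigma$-finite, and off it $T_t=0$ a.e., so we may assume $X$ is $\sigma$-finite. Because $\mathcal K_+(\mathcal H)$ is separable (finite-rank operators are norm dense), the weak measurability in Definition~\ref{cov}(i) upgrades to Bochner strong measurability of $t\mapsto T_t$ via the Pettis Measurability Theorem, so $t\mapsto T_t$ is an a.e.\ uniform limit of countably-valued measurable functions. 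Partitioning the $\sigma$-finite support into pieces $\{Z_m\}_{m\in\N}$ with $\mu(Z_m)\le 1$, approximating $t\mapsto T_t$ on $Z_m$ by a countably-valued measurable function within operator norm $\ve/2^{m+1}$, and then refining, one obtains a partition $\{X_k\}_{k\in\N}$ of $X$ and points $t_k\in X_k$ such that the countably-valued family $S_t:=T_{t_k}$ for $t\in X_k$ satisfies $\|S_t-T_t\|\le\ve/2^m$ for a.e.\ $t\in Z_m$ (the same three-term estimate as in \eqref{ss2}). Since $|\lan (S_t-T_t)f,f\ran|\le\|S_t-T_t\|$ for $\|f\|=1$, and since $\Phi_\tau-\Phi^S_\tau$ is self-adjoint, integrating gives, for every measurable $\tau:X\to[0,1]$,
\[
\|\Phi_\tau - \Phi^S_\tau\| = \sup_{\|f\|=1}\Bigl|\int_X\tau(t)\lan(S_t-T_t)f,f\ran\,d\mu(t)\Bigr| \le \sum_{m}\frac{\ve}{2^m}\mu(Z_m) \le \ve,
\]
where $\Phi^S_\tau$ is the weighted POVM built from $\{S_t\}_{t\in X}$. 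Note that, in contrast to the rank-one computation in Lemma~\ref{approx}, the sizes $\|T_t\|$ do not enter, so the crude bound by the operator norm suffices here.

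The second step is the non-atomic realization from the proof of Theorem~\ref{lya}. Each $X_k$ with $T_{t_k}\ne0$ has finite measure (as $\{S_t\}$ is Bessel), and $\Phi^S_\tau=\sum_k\tau_k\, T_{t_k}$ with $\tau_k=\int_{X_k}\tau\,d\mu\le\mu(X_k)<\infty$. Since $\mu$ is non-atomic, choose $E_k\subset X_k$ with $\mu(E_k)=\tau_k$ and put $E=\bigcup_k E_k$; then $\lan\Phi(E)f,f\ran=\sum_k\int_{E_k}\lan S_t f,f\ran\,d\mu=\sum_k\mu(E_k)\lan T_{t_k}f,f\ran=\lan\Phi^S_\tau f,f\ran$, so $\Phi(E)=\Phi^S_\tau$. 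Hence $\|\Phi(E)-\Phi_\tau\|=\|\Phi^S_\tau-\Phi_\tau\|\le\ve$, and as $\ve>0$ was arbitrary, $\Phi_\tau\in\overline{\mathcal S}$. Thus $\mathcal T\subset\overline{\mathcal S}$, so $\overline{\mathcal S}=\overline{\mathcal T}$ is convex, and this is the closure of the range of $\Phi$.

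I expect the main technical obstacle to be the measurability bookkeeping: verifying that the weak measurability against rank-one operators assumed in Definition~\ref{cov}(i), together with the Bessel bound, genuinely yields Bochner strong measurability of $t\mapsto T_t\in\mathcal K_+(\mathcal H)$ (so that the Diestel--Uhl countably-valued approximation of \cite[Corollary~II.3]{DU} applies on the $\sigma$-finite support), and keeping careful track that every operator in sight, in particular each $\Phi(E)$ and $\Phi_\tau$, is well defined and positive through its bounded quadratic form. Once this is in place, the convexity of $\mathcal T$ and the non-atomic realization are essentially formal repetitions of the arguments already given for Theorems~\ref{lya} and \ref{lyu}.
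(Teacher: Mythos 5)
Your proposal follows exactly the route the paper indicates for Theorem~\ref{cov8}: upgrade Definition~\ref{cov}(i) to strong measurability via Pettis and the separability of the compact operators, approximate $\{T_t\}_{t\in X}$ by a countably-valued Bessel family, and then repeat the non-atomic realization of Theorems~\ref{lya} and \ref{lyu}; your observation that the operator-norm approximation makes the dyadic size stratification of Lemma~\ref{approx} unnecessary is correct. One small slip in the last step: $\lan \Phi(E)f,f\ran$ integrates $\lan T_t f,f\ran$, not $\lan S_t f,f\ran$, so what your computation actually shows is $\Phi^S(E)=\Phi^S_\tau$ where $\Phi^S$ is the POVM of the countably-valued family; you need one more application of your approximation estimate with $\tau=\ch_E$ to get $\|\Phi(E)-\Phi^S(E)\|\le\ve$, and the triangle inequality then gives $\|\Phi(E)-\Phi_\tau\|\le 2\ve$, which is still sufficient since $\ve>0$ is arbitrary.
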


However, there is a definite limitation how far one can extend Lyapunov's theorem in this direction. For example, the assumption that the Bessel family $\{T_t\}_{t\in X}$ in Theorem \ref{cov8} is compact-valued is necessary, see \cite{Bow2}.

\section{Discrete frames and approximate Lyapunov's theorem} \label{S3}

Akemann and Weaver \cite{AW} have shown an interesting generalization of Weaver's $KS_r$ Conjecture \cite{We} in the form of approximate Lyapunov theorem. This was made possible thanks to the breakthrough solution of the Kadison-Singer problem \cite{CT, KS} by Marcus, Spielman, and Srivastava \cite{MSS}. 

Hence, if $\{\phi_i\}_{i\in I}$ in $\mathcal H$ is a frame (or more generally Bessel sequence), then its frame operator is given
\[
S = \sum_{i\in I} \phi_i \otimes \phi_i.
\]
In particular, if $\phi \in \mathcal H = \C^d$, then $\phi \otimes \phi$ is represented by $d\times d$ matrix $\phi \phi^*$, where $\phi$ is treated as a column vector and $\phi^*$ is its adjoint, a row vector.

The main result of \cite{MSS} takes the following form. The special case was shown by Casazza, Marcus, Speegle, and the author \cite{BCMS}.

\begin{theorem}\label{thmp}
Let $\epsilon > 0$. Suppose that $v_1, \dots, v_m$ are jointly independent random vectors in $\C^d$, which take finitely many values and satisfy  
\begin{equation}\label{thmp1}
\sum_{i=1}^m \EV[v_{i}v_{i}^*] = \mathbf I
\qquad\text{and}\qquad
\EV[ \|v_{i}\|^{2}] \le \epsilon \quad\text{for all } i.
\end{equation}
Then,
\begin{equation}\label{thmp2}
\PP \bigg(\bigg\| \sum_{i=1}^m v_i v_i^* \bigg\| \leq (1 + \sqrt{\epsilon})^2 \bigg) > 0.
\end{equation}
In the special case when  $v_1,\ldots,v_m$ take at most two values and $\epsilon<1/2$, we have
\[
\PP\bigg( \bigg\| \sum_{i=1}^m v_i v_i^* \bigg\| \leq 1 + 2 \sqrt{\epsilon} \sqrt{1-\epsilon}\bigg)  > 0 .
\]
\end{theorem}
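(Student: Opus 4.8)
The plan is to follow the method of interlacing polynomials of Marcus, Spielman, and Srivastava. Since each $v_i$ takes only finitely many values, there are only finitely many realizations $(v_1,\dots,v_m)=(w_1,\dots,w_m)$, so it suffices to exhibit one of them with $\big\|\sum_i w_iw_i^*\big\|\le(1+\sqrt\epsilon)^2$, equivalently one whose characteristic polynomial $\det(x\mathbf I-\sum_i w_iw_i^*)$ has largest root at most $(1+\sqrt\epsilon)^2$. I would split the argument into a combinatorial part and an analytic part: (a) show that a realization can be selected, node by node along the decision tree of the $v_i$, whose characteristic polynomial has largest root no larger than that of the \emph{expected} characteristic polynomial $\chi(x):=\EV\big[\det(x\mathbf I-\sum_i v_iv_i^*)\big]$; and (b) bound the largest root of $\chi$ by $(1+\sqrt\epsilon)^2$. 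The hypotheses \eqref{thmp1} enter only in (b), through $\sum_i\EV[v_iv_i^*]=\mathbf I$ and $\tr\EV[v_iv_i^*]=\EV[\|v_i\|^2]\le\epsilon$.

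For (a) I would set up an interlacing family: at level $k$ of a tree one fixes the values of $v_1,\dots,v_k$ and attaches the corresponding conditional expected characteristic polynomial; a node's polynomial is the convex combination of those of its children, so once the children's polynomials have a common interlacing, at least one child has largest root no larger than the parent's, and descending from the root $\chi$ to a leaf produces the desired realization. The common-interlacing requirement reduces to real-rootedness of all the conditional expected polynomials, and this is where the rank-one structure is essential: by the multilinearity of $\det\big(x\mathbf I+\sum_i v_iv_i^*\big)$ as a function of the rank-one matrices $v_iv_i^*$, one has
\[
\EV\Big[\det\Big(x\mathbf I-\sum_{i=1}^m v_iv_i^*\Big)\Big]=\Big(\prod_{i=1}^m(1-\partial_{z_i})\Big)\det\Big(x\mathbf I+\sum_{i=1}^m z_i\,\EV[v_iv_i^*]\Big)\Big|_{z_1=\dots=z_m=0},
\]
and similarly for every conditional expectation. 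The right-hand side is real-rooted because $\det(x\mathbf I+\sum_i z_iB_i)$ is real stable for positive semidefinite $B_i$, and each operator $1-\partial_{z_i}$ preserves real stability (or maps to $0$), hence, after specialization of the $z_i$, real-rootedness.

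Step (b) is the step I expect to be the main obstacle: it is the multivariate barrier argument, the technical core of \cite{MSS}. With $B_i=\EV[v_iv_i^*]$ one must show that the mixed characteristic polynomial displayed above has largest root at most $(1+\sqrt\epsilon)^2$, given $\sum_iB_i=\mathbf I$ and $\tr B_i\le\epsilon$. I would start from the real-stable polynomial $\det(x\mathbf I+\sum_i z_iB_i)$ and an evaluation point with $x$-coordinate $x_0=(1+\sqrt\epsilon)^2$ and all $z_i=0$, at which the barrier functions $\Phi^{z_i}=\partial_{z_i}\log(\,\cdot\,)$ equal $\tr(B_i)/x_0\le\epsilon/(1+\sqrt\epsilon)^2<1$, and then apply the operators $1-\partial_{z_i}$ one at a time. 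The crux is the barrier lemma of \cite{MSS}: for a real-stable polynomial and a point above its roots, if the relevant barrier is small then applying $1-\partial_{z_i}$ together with an appropriate shift of the $i$-th coordinate keeps the point above the roots while not increasing any barrier function. The smallness of the initial barriers, which is exactly what $\tr B_i\le\epsilon$ provides, is what makes the shifts admissible at every step; after exhausting all $m$ indices one concludes that $(1+\sqrt\epsilon)^2$ lies above the largest root of $\chi$, and combining this with (a) yields \eqref{thmp2}.

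For the special case where each $v_i$ takes at most two values, the same interlacing-family construction applies over a binary tree, and a more careful run of the barrier estimate — carried out by Casazza, Marcus, Speegle, and the author \cite{BCMS} — exploits this binary branching to produce the sharper bound $1+2\sqrt\epsilon\sqrt{1-\epsilon}$ in place of $(1+\sqrt\epsilon)^2$; the restriction $\epsilon<1/2$ is precisely the range in which this refined estimate remains valid.
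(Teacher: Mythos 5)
The paper does not actually prove Theorem \ref{thmp}: it is quoted as the main result of \cite{MSS} (with the two-valued refinement from \cite{BCMS}) and used as a black box in the sequel, so there is no internal proof to compare against; your proposal is an outline of the original Marcus--Spielman--Srivastava argument, and its overall architecture (reduction to one realization, interlacing family over the decision tree, the identity expressing $\EV[\det(x\mathbf I-\sum_i v_iv_i^*)]$ as the mixed characteristic polynomial $\prod_i(1-\partial_{z_i})\det(x\mathbf I+\sum_i z_iB_i)\vert_{z=0}$, real stability, and the multivariate barrier bound) is the right one, with the genuinely hard steps correctly identified and deferred to \cite{MSS}.

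Two points would need repair in a written-out version. First, in the interlacing step, a common interlacing of the children of a node is equivalent to real-rootedness of \emph{all convex combinations} of the children's polynomials, not merely of the conditional expected polynomials themselves; in \cite{MSS} this is handled by noting that every such convex combination is again a mixed characteristic polynomial of independent, finitely supported rank-one random vectors (the convex weights are absorbed into the distribution of the next vector), hence real-rooted by the same stability argument. Second, your barrier bookkeeping does not give the stated bound: if you start at $x_0=(1+\sqrt{\epsilon})^2$ with $z_1=\cdots=z_m=0$ and each application of $1-\partial_{z_i}$ is accompanied by an upward shift $\delta$ in the $i$-th coordinate, then after $m$ steps you have only shown that the point with all coordinates equal to $x_0+\delta$ lies above the roots, i.e.\ you bound the largest root of $\chi$ by $x_0+\delta$, not by $x_0$. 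In \cite{MSS} one first uses $\sum_i B_i=\mathbf I$ to rewrite $x\mathbf I+\sum_i z_iB_i=\sum_i y_iB_i$ with $y_i=x+z_i$, starts the barrier process at the lower point $y_1=\cdots=y_m=t_0=\epsilon+\sqrt{\epsilon}$, where the barriers equal $\tr(B_i)/t_0\le\sqrt{\epsilon}/(1+\sqrt{\epsilon})=1-1/\delta$ with $\delta=1+\sqrt{\epsilon}$, and the coordinate shifts then land exactly at $t_0+\delta=(1+\sqrt{\epsilon})^2$, which is what yields \eqref{thmp2}. With these corrections your sketch is the standard proof, and the two-valued case with bound $1+2\sqrt{\epsilon}\sqrt{1-\epsilon}$ for $\epsilon<1/2$ is indeed the sharpened barrier analysis of \cite{BCMS}, as you indicate.
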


Theorem \ref{thmp} implies Weaver's $KS_r$ conjecture. We state it in a form formulated by Akemann and Weaver \cite[Lemma 2.1]{AW}.

\begin{lemma}\label{MSS2}
Let $\{u_i\}_{i \in [m]}$ in  $\C^d$ be a Parseval frame
\begin{equation}\label{mss3}
\sum_{i=1}^m u_i u_i^* = \mathbf I \qquad\text{and}\qquad\|u_i\|^2 \le \delta
\quad\text{for all }i.
\end{equation}
Let $r\in\N$ and $t_1, \ldots, t_r>0$ satisfy $\sum_{k=1}^r t_k=1$. Then, there exists a partition $\{I_1,\ldots, I_r\}$ of $[m]$ such that each $\{u_i\}_{i\in  I_k}$, $k=1,\ldots,r$, is a Bessel sequence with the  bounds
\begin{equation}\label{mss5}
\bigg\|\sum_{i\in I_k} u_i u_i^* \bigg\| \le t_k (1 + \sqrt{r\delta})^2.
\end{equation}
\end{lemma}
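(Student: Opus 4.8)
The plan is to derive Lemma~\ref{MSS2} from Theorem~\ref{thmp} by a direct randomization argument. First I would normalize: given the Parseval frame $\{u_i\}_{i\in[m]}$ with $\|u_i\|^2 \le \delta$ and weights $t_1,\dots,t_r$, I want to build, for each index $i$, a random vector living in the larger space $\C^{rd} = \C^d \oplus \cdots \oplus \C^d$ ($r$ copies). The idea is that a partition of $[m]$ into $r$ blocks is the same thing as a coloring $c:[m]\to[r]$, and I encode the color of $i$ by placing a scaled copy of $u_i$ in the $k$-th block when $c(i)=k$. Concretely, set $v_i$ to equal $\tfrac{1}{\sqrt{t_k}}\,(0,\dots,u_i,\dots,0)$ (the $u_i$ in slot $k$) with probability $t_k$, independently over $i$.

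Next I would check the hypotheses \eqref{thmp1} of Theorem~\ref{thmp} for these $v_i$. Each $v_i$ takes finitely many ($r$) values, and they are jointly independent by construction. For the first moment: $\EV[v_i v_i^*]$ is the $rd\times rd$ block-diagonal matrix whose $k$-th diagonal block is $t_k \cdot \tfrac{1}{t_k} u_i u_i^* = u_i u_i^*$; summing over $i$ and using $\sum_i u_i u_i^* = \mathbf I_d$ in each block gives $\sum_{i=1}^m \EV[v_i v_i^*] = \mathbf I_{rd}$, as required. For the second moment bound: $\|v_i\|^2 = \tfrac{1}{t_k}\|u_i\|^2 \le \tfrac{1}{t_k}\delta$ on the event of probability $t_k$, so $\EV[\|v_i\|^2] = \sum_{k=1}^r t_k \cdot \tfrac{1}{t_k}\|u_i\|^2 = r\|u_i\|^2 \le r\delta$. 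Hence Theorem~\ref{thmp} applies with $\epsilon = r\delta$ (I should note the statement is vacuous unless $r\delta$ is small, in particular $<1$, which is the regime of interest).

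Then I would unwind the conclusion \eqref{thmp2}. The operator $\sum_{i=1}^m v_i v_i^*$ is block-diagonal with $k$-th block equal to $\sum_{i:\,c(i)=k} \tfrac{1}{t_k} u_i u_i^*$, so its norm is $\max_k \tfrac{1}{t_k}\big\|\sum_{i\in I_k} u_i u_i^*\big\|$ where $I_k = c^{-1}(k)$. Theorem~\ref{thmp} gives a coloring (a point in the probability space, which exists since the probability is strictly positive) for which this maximum is at most $(1+\sqrt{r\delta})^2$. That is exactly $\big\|\sum_{i\in I_k} u_i u_i^*\big\| \le t_k(1+\sqrt{r\delta})^2$ for every $k$, which is \eqref{mss5}. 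Taking the partition $\{I_1,\dots,I_r\}$ (discarding or tolerating empty blocks, which only help the bound) finishes the proof.

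I do not expect a serious obstacle here — the argument is a clean reduction. The only points requiring a little care are: (a) getting the $\tfrac{1}{\sqrt{t_k}}$ scaling right so that both the first-moment identity comes out to the full identity \emph{and} the second-moment bound is $r\delta$ rather than something worse; (b) making sure the block-diagonal structure is genuinely respected, i.e. that $v_i v_i^*$ has only one nonzero block, which is automatic since each realization of $v_i$ is supported in a single slot; and (c) handling the degenerate possibility that some color is unused, which is harmless. So the ``hard part'' is really just the bookkeeping of the direct-sum encoding; the substance is entirely carried by Theorem~\ref{thmp}.
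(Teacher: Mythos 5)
Your argument is correct and is essentially identical to the paper's proof: the same random block-vector construction $v_i = t_k^{-1/2}(0,\dots,u_i,\dots,0)$ with probability $t_k$, the same verification of \eqref{thmp1} with $\epsilon = r\delta$, and the same reading of the block-diagonal norm bound to extract the partition. The only superfluous remark is the worry about $r\delta < 1$; Theorem \ref{thmp} needs no such restriction, so no caveat is required.
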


\begin{proof} Assume $\{u_i\}_{i \in [m]}$ in $\C^d$ satisfies \eqref{mss3}. For any $r\in \N$, let $v_1,\ldots,v_m$ be independent random vectors in $(\C^d)^{\oplus r}=\C^{rd}$ such that each vector $v_i$ takes $r$ values
\[
(t_1)^{-1/2} \begin{bmatrix}  u_i \\ 0 \\ \vdots  \\0 \end{bmatrix}, \ldots,
(t_k)^{-1/2} \begin{bmatrix}    0 \\ \vdots  \\ 0 \\  u_i \end{bmatrix}
\]
with probabilities $t_1,\ldots,t_r$, respectively. Then,
\[
\sum_{i=1}^m \EV [v_i v_i^*] 
=  \begin{bmatrix} \sum_{i=1}^m u_i u_i^* &   &  \\   & \ddots &  \\  &  & \sum_{i=1}^m u_i u_i^*\end{bmatrix} = 
\begin{bmatrix} \mathbf I_d &   &  \\   & \ddots &  \\  &  & \mathbf I_d \end{bmatrix} =\mathbf I_{dr},
\]
and
\[
\EV[||v_i||^2 ] =  r||u_i||^2 \le \epsilon:=r\delta. 
\]
Hence, \eqref{thmp1} holds and Theorem \ref{thmp} yields \eqref{thmp2}. Choose an outcome for which the bound in \eqref{thmp2} happens. For this outcome define
\[
I_k = \{i \in [m]: v_i \text{ is  non-zero in $k^{\rm th}$ entry} \}, \qquad\text{for } k=1,\ldots,r.
\]
Thus, the block diagonal matrix
\[
\sum_{i=1}^m v_i v_i^* 
= \begin{bmatrix} \frac1{t_1} \sum_{i\in I_1} u_i u_i^* &   &  \\   & \ddots &  \\  &  & \frac1{t_r} \sum_{i\in I_r} u_i u_i^*\end{bmatrix}
\]
has norm bounded by $(1 + \sqrt{\epsilon})^2$. This implies that each block has norm bounded as in \eqref{mss5}.
\end{proof}

The following result shows that Lemma \ref{MSS2} also holds in infinite dimensional setting.

\begin{theorem}\label{MSS}
Let $I$ be at most countable index set. Let $\{\phi_i\}_{i \in I}$ be a Parseval frame in a separable Hilbert space $\mathcal H$,
\begin{equation}\label{mss1}
\sum_{i\in I}\phi_i \otimes \phi_i = \mathbf I \qquad\text{and}\qquad\|\phi_i\|^2 \le \delta
\quad\text{for all }i.
\end{equation}
Let $r\in\N$ and $t_1, \ldots, t_r>0$ satisfy $\sum_{k=1}^r t_k=1$. Then, there exists a partition $\{I_1,\ldots, I_r\}$ of $I$ such that 
\begin{equation}\label{mss2}
\bigg\| \sum_{i\in I_k} \phi_i \otimes \phi_i  \bigg\| \le  t_k (1 + \sqrt{r\delta})^2
\qquad\text{for all } k=1,\ldots,r. \end{equation}
\end{theorem}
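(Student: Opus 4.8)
The plan is to reduce the infinite-dimensional statement to the finite-dimensional Lemma \ref{MSS2} by a truncation-and-compactness argument. First I would handle the trivial case where $I$ is finite (in which case $\mathcal H$ is at most $|I|$-dimensional after restricting to the closed span, and Lemma \ref{MSS2} applies almost directly). So assume $I$ is infinite, say $I=\N$. For each $m$, consider the finite set $\{\phi_i\}_{i\le m}$; these do not form a Parseval frame, but the defect operator $R_m = \mathbf I - \sum_{i\le m}\phi_i\otimes\phi_i$ is positive with $\|R_m\|\le 1$, and $R_m\to 0$ in the strong operator topology. One would like to ``complete'' the first $m$ vectors to a finite Parseval frame by adding a controlled family of small vectors coming from a spectral decomposition of $R_m$, apply Lemma \ref{MSS2}, and pass to a limit.

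The key steps, in order, are as follows. (1) Fix $m$ and diagonalize $R_m$: since $R_m$ is positive and $\le \mathbf I$, write $R_m = \sum_j \lambda_j\, e_j\otimes e_j$ with $0\le\lambda_j\le 1$. Split each eigenvector contribution $\lambda_j\, e_j\otimes e_j$ into $\lceil \lambda_j/\delta\rceil$ copies of a vector of squared norm $\le\delta$, obtaining a countable family $\{\eta_l\}$ with $\sum_l \eta_l\otimes\eta_l = R_m$ and $\|\eta_l\|^2\le\delta$. (2) Then $\{\phi_i\}_{i\le m}\cup\{\eta_l\}_l$ is a Parseval frame with all squared norms $\le\delta$; it is still infinite, but its frame operator restricted to the (finite-dimensional, if $\mathcal H$ is; otherwise we need a further truncation — see the obstacle below) relevant subspace lets us invoke the finite case. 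Apply Lemma \ref{MSS2} with the given $r$ and $t_1,\dots,t_r$ to get a partition of the index set into $I_1^{(m)},\dots,I_r^{(m)}$ with $\|\sum_{i\in I_k^{(m)}}(\cdot)\otimes(\cdot)\|\le t_k(1+\sqrt{r\delta})^2$. (3) Restrict the partition to $\{1,\dots,m\}$, obtaining a partition $\{1,\dots,m\}=\bigsqcup_k J_k^{(m)}$ with $\|\sum_{i\in J_k^{(m)}}\phi_i\otimes\phi_i\|\le t_k(1+\sqrt{r\delta})^2$. (4) Use a diagonal/compactness argument (König's lemma, or equivalently sequential compactness of $\{1,\dots,r\}^{\N}$ in the product topology) to extract a subsequence of the colorings that stabilizes pointwise, yielding a single coloring $I=\bigsqcup_k I_k$ such that every finite restriction satisfies the bound; since the operator norm of $\sum_{i\in I_k}\phi_i\otimes\phi_i$ is the supremum over finite sub-sums (the summands being positive), the bound \eqref{mss2} follows.

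The main obstacle is that $\mathcal H$ need not be finite-dimensional, so Lemma \ref{MSS2} — which is stated for $\C^d$ — cannot be applied verbatim to the completed family from step (2). To get around this I would first replace $\mathcal H$ by the finite-dimensional space $\mathcal H_m := \operatorname{span}\{\phi_i : i\le m\}$ and work with the compressions $P_m(\phi_i\otimes\phi_i)P_m$ for $i\le m$ together with a completion of $\mathbf I_{\mathcal H_m} - \sum_{i\le m} P_m(\phi_i\otimes\phi_i)P_m$ into $\delta$-small vectors inside $\mathcal H_m$; this is genuinely finite-dimensional. One must then check that partitioning these compressed vectors suffices to control the original (uncompressed) operators $\sum_{i\in J_k}\phi_i\otimes\phi_i$: this needs the observation that for $i\le m$ the vector $\phi_i$ differs from $P_m\phi_i$ by a vector orthogonal to $\mathcal H_m$, but actually $\phi_i\in\mathcal H_m$ already, so $P_m\phi_i=\phi_i$ and no approximation is lost. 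The remaining subtlety is purely bookkeeping — ensuring the added completion vectors have squared norm exactly $\le\delta$, which the splitting in step (1) guarantees — and the measure-theoretic issues of Section \ref{S2} do not arise here since everything is a countable sum. Alternatively, one could cite the fact that Lemma \ref{MSS2}, being a finite statement, trivially extends to Bessel sequences in any Hilbert space via restriction to the span, and then the whole argument collapses to steps (1)–(4) above.
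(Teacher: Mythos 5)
Your proposal is correct and follows essentially the same route as the paper: you complete the truncated family by splitting the positive defect operator into $\delta$-small scaled eigenvectors inside the finite-dimensional span (the paper's Bessel-to-Parseval reduction for Lemma \ref{MSS2}), restrict the resulting partition to the initial segment, and then extract a global partition by a K\"onig/compactness argument, which is exactly the paper's pinball (repeated pigeonhole) principle, finishing with the same positivity/supremum limit. No substantive differences to report.
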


\begin{proof}
First, observe that the Parseval frame assumption \eqref{mss3} can be weakened by the Bessel condition. Indeed, suppose that $\{u_i\}_{i\in [m]}$ is merely a Bessel sequence with bound $1$ and $||u_i||^2 \le \delta$. Define $d\times d$ matrix $T$ as
\[
T = \mathbf I - \sum_{i=1}^m u_i \otimes u_i.
\]
Since $T$ is positive semidefinite, we can find vectors $\{u_i\}_{i=m+1}^{m'}$, $m'>m$, such that
\[
T = \sum_{i=m+1}^{m'} u_i \otimes u_i \qquad\text{and}\qquad ||u_i||^2 \le \delta \text{ for }i\ge m+1.
\]
Indeed, it suffices to choose vectors $u_i$ to be appropriately scaled eigenvectors of $T$. Consequently, $\{u_i\}_{i\in [m']}$ becomes a Parseval frame for $\C^d$ and by Lemma \ref{MSS2} we can find a partition $\{I_1,\ldots, I_r\}$ of $[m']$ such that corresponding subsets $\{u_i\}_{i\in I_k}$ have required Bessel bounds. Restricting this partition to $[m]$ yields the same conclusion for $\{u_i\}_{i\in I_k\cap [m]}$, $k=1,\ldots,r$.

Now suppose $\{\phi_i\}_{i\in I}$ is a Parseval frame in an infinite dimensional Hilbert space $\mathcal H$ as in \eqref{mss1}. Since $\mathcal H$ is separable, $I$ is countable, and we may assume $I=\N$. For any $n\in \N$ we can apply Lemma \ref{MSS2} to the initial sequence $\{\phi_i\}_{i\in [n]}$. Hence, for each $n\in \N$ we have a partition $\{I_1^n, \ldots, I_r^n\}$ of $[n]$, which yields the required bound \eqref{mss2}. To show the existence of a global partition of $\{I_1,\ldots,I_r\}$ of $\N$ satisfying \eqref{mss2}, it suffices to apply the pinball principle \cite[Proposition 2.1]{CCLV}. This boils down to repeated applications of pigeonhole principle. The first vector $\phi_1$ must land infinitely many times to one of the slots $I_{j_1}^n$ for some $j_1=1,\ldots,r$. Let $N_1 \subset \N$ be the collection of all such $n$. Then, we repeat the same argument to the second vector $\phi_2$ for partitions of $[n]$, where $n\in N_1$. Again, we can find a slot $I_{j_2}^n$, where the second vector $u_2$ lands for infinitely many $n\in N_2 \subset N_1$. Repeating this process yields a nested sequence of infinite subsets $N_1 \supset N_2 \supset \ldots$ and indices $j_1,j_2,\ldots$ in $[r]$ such that the initial vectors $\phi_1,\ldots, \phi_m$, $m\in \N$, all land to the same respective slots $I^n_{j_1},\ldots,I^n_{j_m}$ for all $n\in N_m$. Define a global partition of $\N$ by $I_k =\{ i \in \N: j_i=k\}$, $k\in [r]$. Thus, \eqref{mss2} holds when $I_k$ replaced by $I_k \cap [m]$. Letting $m\to \infty$ shows the required Bessel bound \eqref{mss2}.
\end{proof}

As a corollary we obtain an infinite dimensional variant of \cite[Corollary 2.2]{AW}.

\begin{corollary}\label{ia} Under the same hypotheses as Theorem \ref{MSS}, there exists a partition $\{I_k\}_{k\in [r]}$ of $I$ such that
\begin{equation}\label{ia1}
\bigg\| \sum_{i \in I_k}  \phi_i \otimes \phi_i - t_k \mathbf I \bigg\| \le  2\sqrt{r\delta} +r\delta \qquad\text{for all } k=1,\ldots,r.
\end{equation}
\end{corollary}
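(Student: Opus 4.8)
The plan is to apply Theorem \ref{MSS} directly and then exploit the fact that $\{I_k\}_{k\in[r]}$ is a \emph{partition}, so that the block partial sums add up to the identity. First I would invoke Theorem \ref{MSS} to obtain a partition $\{I_1,\dots,I_r\}$ of $I$ with
\[
0 \le \sum_{i\in I_k}\phi_i \otimes \phi_i \le t_k (1+\sqrt{r\delta})^2 \mathbf I \qquad (k=1,\dots,r),
\]
where the lower bound is automatic because each $\phi_i\otimes\phi_i$ is positive. Set $A_k = \sum_{i\in I_k}\phi_i\otimes\phi_i - t_k\mathbf I$, which is self-adjoint. The displayed upper bound gives at once
\[
A_k \le t_k\big((1+\sqrt{r\delta})^2 - 1\big)\mathbf I = t_k(2\sqrt{r\delta}+r\delta)\mathbf I \le (2\sqrt{r\delta}+r\delta)\mathbf I,
\]
using $t_k \le 1$. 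So one inequality of \eqref{ia1} is essentially free.

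For the reverse inequality $-A_k \le (2\sqrt{r\delta}+r\delta)\mathbf I$, I would use the resolution of identity $\sum_{j=1}^r \sum_{i\in I_j}\phi_i\otimes\phi_i = \sum_{i\in I}\phi_i\otimes\phi_i = \mathbf I$ to rewrite
\[
-A_k = t_k\mathbf I - \sum_{i\in I_k}\phi_i\otimes\phi_i = \sum_{j\ne k}\sum_{i\in I_j}\phi_i\otimes\phi_i - (1-t_k)\mathbf I .
\]
Summing the block upper bounds over $j\ne k$ and using $\sum_{j\ne k}t_j = 1-t_k$ gives $\sum_{j\ne k}\sum_{i\in I_j}\phi_i\otimes\phi_i \le (1-t_k)(1+\sqrt{r\delta})^2\mathbf I$, hence
\[
-A_k \le (1-t_k)(2\sqrt{r\delta}+r\delta)\mathbf I \le (2\sqrt{r\delta}+r\delta)\mathbf I .
\]
Combining the two bounds yields $\|A_k\| \le 2\sqrt{r\delta}+r\delta$, which is precisely \eqref{ia1}.

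There is essentially no obstacle: the corollary is a soft consequence of Theorem \ref{MSS}. The one point to be careful about is that the argument needs \emph{two-sided} control of $\sum_{i\in I_k}\phi_i\otimes\phi_i$, and the lower side is recovered "for free" from the upper bounds on the other $r-1$ blocks together with the partition-of-unity identity $\sum_{i\in I}\phi_i\otimes\phi_i = \mathbf I$; this also makes clear why one cannot drop the Parseval hypothesis here. In the infinite-dimensional case all the sums converge in the strong operator topology and the manipulations above are legitimate, since every partial sum is dominated by $\mathbf I$, so no additional care beyond that already taken in Theorem \ref{MSS} is required.
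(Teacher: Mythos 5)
Your proof is correct and follows essentially the same route as the paper: apply Theorem \ref{MSS} for the upper bound on each block, then sum the bounds over the other $r-1$ blocks and use the Parseval identity $\sum_{i\in I}\phi_i\otimes\phi_i=\mathbf I$ to obtain the matching lower bound, and combine using $t_k,\,1-t_k\le 1$. No discrepancies to report.
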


\begin{proof}
Theorem \ref{MSS} yields
\begin{equation}\label{ia2}
\sum_{i\in I_k} \phi_i \otimes \phi_i  \le  t_k (1 + \sqrt{r\delta})^2 \mathbf I= t_k + t_k(2\sqrt{r\delta} +r\delta) \mathbf I.
\end{equation}
Summing the above over all $k' \ne k$ yields
\[
\mathbf I - \sum_{i\in I_k} \phi_i \otimes \phi_i = \sum_{i \in I \setminus I_k} \phi_i \otimes \phi_i \le \sum_{k' \ne k}  t_{k'} (1 + \sqrt{r\delta})^2 \mathbf I = (1-t_k) (1 + \sqrt{r\delta})^2 \mathbf I.
\]
Hence,
\begin{equation}\label{ia3}
\sum_{i\in I_k} \phi_i \otimes \phi_i \ge (1 - (1-t_k)  (1 + \sqrt{r\delta})^2) \mathbf I = (t_k - (1-t_k)(2\sqrt{r\delta} +r\delta)) \mathbf I.
\end{equation}
Combining \eqref{ia2} and \eqref{ia3} yields \eqref{ia1}.
\end{proof}

The next step is the following lemma due to Akemann and Weaver \cite[Lemma 2.3]{AW} which relaxes the assumption of Parseval frame by Bessel sequence.

\begin{lemma}
\label{awl}
There exists a universal constant $C>0$ such that the following holds.
Suppose $\{\phi_i\}_{i\in I}$ is a Bessel family with bound $1$ in a separable Hilbert space $\mathcal{H}$, which consists of vectors of norms $\|\phi_i\|^2\leq \ve$, where $\ve>0$. Let $S$ be its frame operator. Then for any $0 \le t \le 1$, there exists a subset $I_0 \subset I$ such that
\[
\bigg\| \sum_{i\in I_0} \phi_i \otimes \phi_i - t S \bigg\| \le C \ve^{1/4}.
\]
\end{lemma}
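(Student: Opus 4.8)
The plan is to reduce the statement to the Parseval case, which is nothing but Corollary~\ref{ia} applied with $r=2$ and weights $t,1-t$, after splitting the frame operator $S$ at a spectral threshold.

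First I would dispose of the trivial cases: take $I_0=\emptyset$ if $t=0$, take $I_0=I$ if $t=1$, and observe that if $\ve\ge 1$ the inequality is automatic since $\|\sum_{i\in I_0}\phi_i\otimes\phi_i-tS\|\le\|S\|+t\|S\|\le 2$. So assume $0<t<1$. Put $\theta=\sqrt\ve$ and let $P_+$ be the spectral projection of $S$ onto the part of its spectrum lying in $[\theta,\infty)$, with $P_-=\mathbf I-P_+$ and $\mathcal H_\pm=P_\pm\mathcal H$. Since $P_\pm$ commute with $S$, we have $S=S_+\oplus S_-$ with $S_+:=S|_{\mathcal H_+}$ satisfying $\theta\,\mathbf I_{\mathcal H_+}\le S_+\le\mathbf I_{\mathcal H_+}$ --- hence $S_+$ is invertible on $\mathcal H_+$ with $\|S_+^{-1/2}\|\le\theta^{-1/2}$ --- and $\|S_-\|\le\theta$.

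The main step is on $\mathcal H_+$. I would note that $\{P_+\phi_i\}_{i\in I}$ is a Bessel family in $\mathcal H_+$ with frame operator $\sum_{i}(P_+\phi_i)\otimes(P_+\phi_i)=P_+SP_+=S_+$, so $\{S_+^{-1/2}P_+\phi_i\}_{i\in I}$ is a Parseval frame for $\mathcal H_+$ with $\|S_+^{-1/2}P_+\phi_i\|^2\le\theta^{-1}\|\phi_i\|^2\le\ve/\theta=\sqrt\ve$. Corollary~\ref{ia} applied to this Parseval frame with $r=2$ and weights $t_1=t$, $t_2=1-t$ then yields a set $I_0\subset I$ with $\bigl\|\sum_{i\in I_0}(S_+^{-1/2}P_+\phi_i)\otimes(S_+^{-1/2}P_+\phi_i)-t\,\mathbf I_{\mathcal H_+}\bigr\|\le 2\sqrt2\,\ve^{1/4}+2\sqrt\ve$. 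Writing $A=\sum_{i\in I_0}\phi_i\otimes\phi_i$, the operator inside this norm equals $S_+^{-1/2}(P_+AP_+)S_+^{-1/2}$, so conjugating by $S_+^{1/2}$ (of norm at most $1$) gives $\|P_+AP_+-tS_+\|\le 2\sqrt2\,\ve^{1/4}+2\sqrt\ve$.

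It remains to estimate the rest of $A-tS$ with respect to $\mathcal H_+\oplus\mathcal H_-$, and here nothing depends on $I_0$: from $0\le A\le S$ one gets $0\le P_-AP_-\le P_-SP_-=S_-$, whence $\|P_-AP_-\|\le\theta$, and the Cauchy--Schwarz inequality for the positive form $(u,v)\mapsto\langle Au,v\rangle$ gives $\|P_+AP_-\|\le\|P_+AP_+\|^{1/2}\|P_-AP_-\|^{1/2}\le\|A\|^{1/2}\theta^{1/2}\le\theta^{1/2}=\ve^{1/4}$. Since $tS=tS_+\oplus tS_-$ with $\|tS_-\|\le\theta$, the standard bound for the norm of a $2\times2$ operator matrix, $\|A-tS\|\le\max\bigl(\|P_+AP_+-tS_+\|,\|P_-AP_--tS_-\|\bigr)+\|P_+AP_-\|$, then yields $\|A-tS\|\le C\ve^{1/4}$ with a universal $C$, using $\sqrt\ve\le\ve^{1/4}$.

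The step I expect to require the most care is the choice of the threshold $\theta$ together with the verification that the off-diagonal block costs nothing. The restricted operator $S_+$ has condition number $\asymp\theta^{-1}$, which turns the $\sqrt{\,\cdot\,}$-type error of Corollary~\ref{ia} into $\sqrt{\ve/\theta}$, while the discarded spectral tail contributes errors of order $\theta$ (through $P_-SP_-$) and $\sqrt\theta$ (through the cross term $P_+AP_-$); balancing $\sqrt{\ve/\theta}$ against $\sqrt\theta$ forces $\theta=\sqrt\ve$ and is precisely what produces the exponent $1/4$ rather than $1/2$. The one structural ingredient that makes the cross term harmless is the inequality $\|P_+AP_-\|\le\|P_+AP_+\|^{1/2}\|P_-AP_-\|^{1/2}$ valid for every positive operator $A$, which I would isolate as an elementary lemma beforehand.
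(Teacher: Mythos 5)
Your proof is correct and follows essentially the same route as the paper: the paper likewise cuts off the spectrum of $S$ at $\sqrt{\ve}$ via $P=\ch_{[\sqrt{\ve},1]}(S)$, conjugates by $S^{-1/2}P$ (your $S_+^{-1/2}P_+$) to obtain a Parseval frame with norms $\le\sqrt{\ve}$, applies the $r=2$ case of Corollary~\ref{ia}, and then controls the remaining blocks using $0\le \sum_{i\in I_0}\phi_i\otimes\phi_i\le S$ together with $\|S(\mathbf I-P)\|\le\sqrt{\ve}$. Your Cauchy--Schwarz bound for the off-diagonal block is just a slight repackaging of the paper's estimate of $\|D(\mathbf I-P)\|$, so no substantive difference.
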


\begin{proof}
Let $S= \sum_{i\in I} \phi_i \otimes \phi_i$ be the frame operator of $\{\phi_i\}_{i\in I}$.
Assume momentarily that $\{\phi_i\}_{i\in I}$ is a Parseval frame. Applying Corollary \ref{ia} for $r=2$, $t_1=t$ and $t_2=1-t$ yields a subset $I' \subset I$ such that 
\begin{equation}\label{aw3}
\bigg\| \sum_{i \in I'} \phi_i \otimes \phi_i - t \mathbf I \bigg\| \le  2\sqrt{2\epsilon} +2\epsilon = O(\sqrt{\epsilon}).
\end{equation}
Here and in what follows we use big $O$ notation since we do not aim at controlling concrete constants.

In general, we use functional calculus to reduce the problem to the above case. That is, we define a projection $P=\ch_{[\sqrt{\epsilon},1]}(S)$, which ``ignores'' a non-invertible part of $S$. Let $\mathcal K$ be the range of $P$. Define an operator $B$ by $B:=S^{-1/2} P$. This operator is well-defined since $S$ is invertible on the range of $P$. Since $\sqrt{\epsilon} P \le SP \le P$, we have
\begin{equation}\label{aw4}
P \le B \le \epsilon^{-1/4} P.
\end{equation}
Define a family of vectors $\{\psi_i\}_{i\in I}$ by 
$\psi_i=B\phi_i$, $ i\in I$.
Since $P$ and $S^{1/2}$ commute,
\[
\sum_{i \in I} \psi_i \otimes \psi_i = B \bigg( \sum_{i\in I} \phi_i \otimes \phi_i \bigg) B = B S B =  S^{-1/2} P S S^{-1/2} P= P.
\]
By \eqref{aw4}, 
\[
||\psi_i||^2 \le \epsilon^{-1/2} ||P\phi_i||^2  \le \sqrt{\epsilon}.
\]
Thus, we can apply \eqref{aw3} to deduce the existence of a subset 
$I' \subset I$ such that 
\begin{equation}\label{aw6}
\bigg\| \sum_{i \in I'} \psi_i \otimes \psi_i - t P \bigg\| = O(\epsilon^{1/4}).
\end{equation}
We claim that
\begin{equation}\label{aw7}
\bigg\| \sum_{i \in I'} \phi_i \otimes \phi_i - t S \bigg\| = O(\epsilon^{1/4}).
\end{equation}
Indeed, let $D=\sum_{i \in I'} \phi_i \otimes \phi_i$. Then,
\begin{equation}
\label{aw8}
\begin{aligned}
\|
P ( D- t S ) P \| &= 
\bigg\| S^{1/2} \bigg( \sum_{i \in I'} \psi_i \otimes \psi_i - t P \bigg) S^{1/2} \bigg\|
\\
&\le \bigg\| \sum_{i \in I'} \psi_i \otimes \psi_i - t P \bigg\| = O(\epsilon^{1/4}).
\end{aligned}
\end{equation}
Since 
\[
0 \le D \le S \qquad\text{and}\qquad 0 \le S(\mathbf I-P) \le \sqrt{\epsilon}\mathbf I,
\]
we have for any $u \in\mathcal K^\perp$ and $v\in\mathcal H$,  
\[
|\langle D u, v \rangle |  \le \langle D^{1/2} u, D^{1/2} v \rangle | \le ||D^{1/2} u|| ||v || \le \langle S u,u \rangle ||v|| \le \sqrt{\epsilon}||u|| ||v||. 
\]
Thus,
\[
|| D (\mathbf I -P) || = ||(\mathbf I - P) D || \le \sqrt{\epsilon}.
\]
Since $ P S  (\mathbf I -P)  = (\mathbf I - P) S P = \mathbf 0$, by \eqref{aw8} the norm $\| D-t S \|$ is less than
\begin{align*}
 & \le  \|P ( D- t S ) P \| + 2 \|(\mathbf I -P) ( D- t S ) P \| + 
\|(\mathbf I -P) ( D- t S ) (\mathbf I -P) \| 
\\
& \le O(\epsilon^{1/4}) + 2  \|(\mathbf I -P)  D P \|  +  \|(\mathbf I -P) D  (\mathbf I -P) \| + \|(\mathbf I -P) S  (\mathbf I -P) \| 
\\
&= O(\epsilon^{1/4})+O(\epsilon^{1/2})=O(\epsilon^{1/4}). 
\end{align*}
This proves the claim and completes the proof of the lemma.
\end{proof}

We are now ready to prove an infinite dimensional formulation of approximate Lyapunov theorem for discrete frames due to Akemann and Weaver \cite[Theorem 2.4]{AW}.

\begin{theorem}
\label{aw}
There exists a universal constant $C_0>0$ such that the following holds.
Suppose $\{\phi_i\}_{i\in I}$ is a Bessel family with bound $1$ in a separable Hilbert space $\mathcal{H}$, which consists of vectors of norms $\|\phi_i\|^2\leq \ve$, where $\ve>0$. Suppose that $0\le t_i \le 1$ for all $i\in I$. Then, there exists a subset $I_0 \subset I$ such that
\begin{equation}\label{aw0}
\bigg\| \sum_{i\in I_0} \phi_i \otimes \phi_i - \sum_{i\in I} t_i \phi_i \otimes \phi_i \bigg\| \le C_0  \ve^{1/8}.
\end{equation}
\end{theorem}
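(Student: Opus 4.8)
The plan is to bootstrap Lemma \ref{awl}, which handles a single weight $t$ uniform across the frame, into the case of arbitrary weights $t_i \in [0,1]$ by discretizing the weights and iterating the lemma dyadically. First I would reduce to a situation where each $t_i$ takes only finitely many values: approximate each $t_i$ by $\tilde t_i = \lfloor 2^k t_i \rfloor / 2^k$ for a suitable resolution $k$ to be chosen (roughly $2^k \approx \ve^{-1/8}$), so that $\sum_{i\in I}(t_i - \tilde t_i)\phi_i\otimes\phi_i$ has norm $O(2^{-k})$, since $0 \le \sum_i (t_i-\tilde t_i)\phi_i\otimes\phi_i \le 2^{-k} S \le 2^{-k}\mathbf I$. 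It then suffices to select a subset $I_0$ achieving $\sum_{i\in I_0}\phi_i\otimes\phi_i \approx \sum_{i\in I}\tilde t_i\,\phi_i\otimes\phi_i$, where now each $\tilde t_i$ is a dyadic rational with $k$ bits.

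Next I would realize the dyadic weights bit by bit. Write $\tilde t_i = \sum_{j=1}^k b_{i,j} 2^{-j}$ with $b_{i,j}\in\{0,1\}$, and for each bit level $j$ let $J_j = \{i\in I : b_{i,j}=1\}$, so that $\sum_{i\in I}\tilde t_i\,\phi_i\otimes\phi_i = \sum_{j=1}^k 2^{-j}\sum_{i\in J_j}\phi_i\otimes\phi_i$. The idea is to apply Lemma \ref{awl} repeatedly: at stage $j$, working inside the current "residual" Bessel family, split off a sub-collection capturing the fraction $2^{-j}$ of the appropriate partial frame operator restricted to indices in $J_j$. One must be careful that the selections at different stages are on disjoint index sets and that the accumulated errors are controlled. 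A cleaner route avoiding bookkeeping across stages: observe that building a subset with $\sum_{i\in I_0}\phi_i\otimes\phi_i$ close to $\tfrac12\sum_i \phi_i\otimes\phi_i$ is exactly Lemma \ref{awl} with $t=\tfrac12$, and that one can then recurse on each half, halving again; after $k$ rounds of halving one can assemble any $k$-bit dyadic weighting. Each round costs an error $O(\ve^{1/4})$ by Lemma \ref{awl} (applied to normalized sub-Bessel families, whose vectors still have squared norm $\le \ve$ since sub-collections of a Bessel family with bound $1$ remain Bessel with bound $1$), and there are $O(k)$ rounds, giving a total error $O(k\,\ve^{1/4})$.

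Balancing the two error sources — $O(2^{-k})$ from truncating the weights and $O(k\,\ve^{1/4})$ from iterating — one chooses $k$ so that $2^{-k}\approx \ve^{1/8}$, i.e. $k\approx \tfrac18\log_2(1/\ve)$; then the iteration error is $O(\ve^{1/4}\log(1/\ve)) = O(\ve^{1/8})$, since $\ve^{1/8}\log(1/\ve)$ is bounded and $\ve^{1/4}\log(1/\ve) = o(\ve^{1/8})$. This yields the bound $C_0\ve^{1/8}$ with a universal constant.

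The main obstacle will be organizing the recursion so that the errors genuinely add rather than compound multiplicatively, and so that at each stage the hypotheses of Lemma \ref{awl} are met by the residual family. In particular, when one passes to a sub-collection and wants to apply Lemma \ref{awl} with the weight $t$ acting only on that sub-collection's frame operator, one must phrase things in terms of the restricted Bessel family $\{\phi_i\}_{i\in J}$ (which has bound $\le 1$ and the same norm bound $\ve$) rather than renormalizing, and then track how its partial frame operator sits inside the ambient operator $S$. I expect this is exactly the point handled in \cite{AW}: the weights are dyadically approximated and Lemma \ref{awl} is applied $O(\log(1/\ve))$ times, with the $\ve^{1/4}$ of Lemma \ref{awl} degrading to $\ve^{1/8}$ precisely because of the logarithmic number of iterations and the truncation trade-off.
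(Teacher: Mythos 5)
There is a genuine gap: you never actually produce the single subset $I_0$ whose partial frame operator approximates an operator with \emph{index-dependent} weights, and the two mechanisms you sketch for doing so do not work as stated. The ``cleaner route'' of recursively halving via Lemma \ref{awl} with $t=\tfrac12$ only yields subsets (or, after $k$ rounds, a partition into $2^k$ pieces) whose partial frame operators approximate scalar multiples of the \emph{total} frame operator $S$; such pieces can be reassembled to approximate $tS$ for a single scalar $t$, but not $\sum_i \tilde t_i\,\phi_i\otimes\phi_i$ with varying $\tilde t_i$, since the halving is uncorrelated with the bit pattern of each index. Your first variant, selecting a fraction $2^{-j}$ from each bit-level set $J_j=\{i: b_{i,j}=1\}$ and taking a union, founders on the fact that the $J_j$ overlap: an index selected at two bit levels is counted once in $\sum_{i\in I_0}\phi_i\otimes\phi_i$ but contributes $2^{-j}+2^{-j'}$ to the target, so the errors do not simply add. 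You correctly identify this bookkeeping as ``the main obstacle,'' but the proof of the theorem is precisely the resolution of that obstacle, and it is left open. (For the record, the bit-by-bit idea can be repaired: round the least significant bit first by applying Lemma \ref{awl} with $t=\tfrac12$ to the family indexed by the current lowest-bit set, update the weights up or down by $2^{-j}$ accordingly, and note that the stage-$j$ error enters multiplied by $2^{-j+1}$, so the total is $O(\ve^{1/4})$; but this careful weight-updating scheme is exactly what your sketch lacks.)

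The paper's proof is simpler and avoids the recursion entirely: it discretizes the weights by \emph{value} rather than by binary digits. Set $n=\lfloor \ve^{-1/8}\rfloor$ and partition $I$ into the \emph{disjoint} level sets $I_k=\{i: (k-1)/n< t_i\le k/n\}$; on each $I_k$ the weight is within $1/n$ of the constant $k/n$, and Lemma \ref{awl} (which already handles an arbitrary single $t\in[0,1]$, so no bit decomposition is needed) applied to the Bessel family $\{\phi_i\}_{i\in I_k}$ gives $I_k'\subset I_k$ with error $O(\ve^{1/4})$ as in \eqref{aw7}. Because the $I_k$ are disjoint, $I_0=\bigcup_k I_k'$ satisfies
\[
\bigg\|\sum_{i\in I_0}\phi_i\otimes\phi_i-\sum_{i\in I}t_i\,\phi_i\otimes\phi_i\bigg\|
\le n\,O(\ve^{1/4})+\frac1n\,\|S\|=O(\ve^{1/8}),
\]
the first term because the $n$ selection errors genuinely add, the second from replacing $t_i$ by $k/n$. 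Your choice of resolution ($\approx\ve^{-1/8}$ levels) and the balancing of the two error sources match the paper, but the key structural step --- partitioning into disjoint sets of nearly constant weight so that one application of Lemma \ref{awl} per piece suffices --- is the idea missing from your argument.
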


\begin{proof}
We proceed exactly as in the proof of \cite[Theorem 2.4]{AW}. That is, we take $n=\lfloor \epsilon^{-1/8} \rfloor$ and we partition $I$ into subsets 
\[
I_k = \{ i \in I: (k-1)/n< t_i \le k/n \}, \qquad k=1,\ldots,n.
\]
Then, we apply \eqref{aw7} for each family $\{\phi_i\}_{i\in I_k}$ for $t=k/n$ to find subsets $I_k' \subset I_k$ such that
\[
\bigg\| \sum_{i \in I'_k} \phi_i \otimes \phi_i - \frac{k}{n}  \sum_{i\in I_k} \phi_i \otimes \phi_i \bigg\| = O(\epsilon^{1/4}).
\]
Taking $I_0 = \bigcup_{k=1}^n I'_k$, we have
\begin{align*}
& \bigg\| \sum_{i \in I_0} \phi_i \otimes \phi_i  -  \sum_{i\in I} t_i \phi_i \otimes \phi_i \bigg\| 
\\
& \le 
 \bigg\| \sum_{k=1}^n  \bigg(\sum_{i \in I'_k} \phi_i \otimes \phi_i- \frac{k}{n}  \sum_{i\in I_k} \phi_i \otimes \phi_i\bigg) \bigg\| +  \bigg\| \sum_{k=1}^n  \sum_{i\in I_k} (k/n-t_i)\phi_i\otimes \phi_i \bigg\|
  \\
& \le    \sum_{k=1}^n  \bigg\| \sum_{i \in I'_k} \phi_i\otimes \phi_i - \frac{k}{n}  \sum_{i\in I_k} \phi_i \otimes \phi_i \bigg\| + O(\epsilon^{1/8}) ||S|| 
\\
& \le n O(\epsilon^{1/4})  + O(\epsilon^{1/8}) =  O(\epsilon^{1/8}).
\end{align*}
This proves Theorem \ref{aw}.
\end{proof}

 As a corollary we obtain a discrete analogue of Theorem \ref{lyu}.

\begin{corollary}\label{dlp}
Suppose $\{\phi_i\}_{i\in I}$ is a Bessel family with bound $1$ in a separable Hilbert space $\mathcal{H}$, which consists of vectors of norms $\|\phi_i\|^2\leq \ve$, where $\ve>0$. Let $\mathcal S$ be the set of all partial frame operators
\[
\mathcal S = \bigg\{ \sum_{i \in I'} \phi_i \otimes \phi_i : I' \subset I \bigg\}.
\]
Then $\mathcal S$ is an approximately convex subset of  $\mathcal B(\mathcal H)$. More precisely, for every $T$ in the convex hull of $\mathcal S$,  there exists $S \in \mathcal S$ such that $||S-T|| \le C_0 \ve^{1/8}$.
\end{corollary}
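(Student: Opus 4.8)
The plan is to obtain this as an immediate consequence of Theorem \ref{aw}: a finite convex combination of partial frame operators is nothing but a weighted frame operator with weights in $[0,1]$, and Theorem \ref{aw} says every such operator lies within $C_0\ve^{1/8}$ of an honest partial frame operator.

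First I would fix $T$ in the convex hull of $\mathcal S$ and write
\[
T = \sum_{j=1}^m \lambda_j \sum_{i\in I'_j} \phi_i\otimes\phi_i,
\qquad
\lambda_j\ge 0,\quad \sum_{j=1}^m\lambda_j = 1,\quad I'_j\subset I.
\]
Interchanging the (finite) outer sum with the inner sums and setting $t_i := \sum_{j:\,i\in I'_j}\lambda_j$, one gets $0\le t_i\le \sum_{j=1}^m\lambda_j = 1$ for every $i\in I$, and $T = \sum_{i\in I} t_i\,\phi_i\otimes\phi_i$. A one-line check is that this last series defines a bounded operator: since $\{\phi_i\}_{i\in I}$ is Bessel with bound $1$ and $t_i\le 1$, the family $\{\sqrt{t_i}\,\phi_i\}_{i\in I}$ is again Bessel with bound $1$, so its frame operator exists and coincides with the finite sum defining $T$.

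Then I would apply Theorem \ref{aw} to $\{\phi_i\}_{i\in I}$ with this choice of weights $\{t_i\}_{i\in I}\subset[0,1]$ to produce a subset $I_0\subset I$ with
\[
\bigg\| \sum_{i\in I_0}\phi_i\otimes\phi_i - \sum_{i\in I}t_i\,\phi_i\otimes\phi_i\bigg\| \le C_0\,\ve^{1/8},
\]
and set $S := \sum_{i\in I_0}\phi_i\otimes\phi_i \in\mathcal S$, which gives $\|S-T\|\le C_0\,\ve^{1/8}$ with the very same constant $C_0$.

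There is no substantial obstacle here — the entire content is in Theorem \ref{aw}, and the corollary is just its packaging. The only points deserving a line of care are that the reweighting $t_i=\sum_{j:\,i\in I'_j}\lambda_j$ indeed lands in $[0,1]$ (immediate from $\sum_j\lambda_j=1$) and that the convex hull is understood to consist of \emph{finite} convex combinations, which is what makes the interchange of sums and the identification $T=\sum_{i\in I} t_i\,\phi_i\otimes\phi_i$ legitimate without any extra hypothesis.
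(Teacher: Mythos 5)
Your proof is correct and is exactly the intended derivation: the paper states Corollary \ref{dlp} without a separate argument, treating it as an immediate consequence of Theorem \ref{aw}, and your reduction (writing a finite convex combination as $\sum_{i\in I} t_i\,\phi_i\otimes\phi_i$ with $t_i=\sum_{j:\,i\in I'_j}\lambda_j\in[0,1]$ and then invoking Theorem \ref{aw}) is precisely that packaging, with the same constant $C_0$.
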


The assumption that $\{\phi_i\}_{i\in I}$ has a Bessel bound $1$ is not essential. Indeed, a scaling of Corollary \ref{dlp} for Bessel sequences with an arbitrary bound $B$ yields the estimate \eqref{sb}.
Finally, we can combine Theorem \ref{lyu} and Corollary \ref{dlp} to obtain Lyapunov's theorem for continuous frames on general measure spaces. This is due to the fact that every measure space decomposes into its atomic and non-atomic components and a continuous frame on an atomic measure space coincides with a discrete frame. 

\begin{corollary}
Suppose that $\{\phi_t\}_{t\in X}$ is a continuous Bessel family in $\mathcal H$ with bound $B$ on any measure space $(X,\mu)$. Let $\mathcal S$ be the set of all partial frame operators as in \eqref{lyu1}.
Define
\[
\ve_0 = \sup \{ \mu(E) ||\phi_t||^2:  E\text{ is an atom in } X \text{ and } t\in E \}.
\]
If $X$ is non-atomic, then we take $\ve_0=0$. Then, $\mathcal S$ is an approximately convex subset of  $\mathcal B(\mathcal H)$. More precisely, for every $T$ in the convex hull of $\mathcal S$ and for every $\ve>\ve_0$, there exists $S \in \mathcal S$ such that 
\begin{equation}\label{sb}
||S-T|| \le C_0 B^{7/8} \ve^{1/8}.
\end{equation}
\end{corollary}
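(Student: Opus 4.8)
The plan is to reduce the general (possibly mixed atomic/non-atomic) case to the two pure cases already handled: the non-atomic part via Theorem~\ref{lyu} (exact convexity) and the atomic part via Corollary~\ref{dlp} (approximate convexity with the $\ve^{1/8}$ bound). First I would decompose $X = X_{at} \cup (X\setminus X_{at})$ into its atomic and non-atomic parts, which is possible since the support of the Bessel family is $\sigma$-finite by Proposition~\ref{p1}(i), so $X_{at}$ consists of at most countably many atoms. Writing $\mathcal S = \{S_{\phi,E} : E\subset X \text{ measurable}\}$, every $E$ splits as $E = E' \cup E''$ with $E' = E\cap X_{at}$ and $E'' = E\setminus X_{at}$, and correspondingly $S_{\phi,E} = S_{\phi,E'} + S_{\phi,E''}$, so $\mathcal S = \mathcal S_{at} + \mathcal S_{na}$ where $\mathcal S_{at}$ and $\mathcal S_{na}$ are the partial frame operators restricted to $X_{at}$ and $X\setminus X_{at}$ respectively. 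The convex hull then splits as $\mathrm{conv}(\mathcal S) \subseteq \mathrm{conv}(\mathcal S_{at}) + \mathrm{conv}(\mathcal S_{na})$.

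Next I would treat the atomic part as a discrete frame. On each atom $E_j$ the map $t\mapsto\phi_t$ is $\mu$-a.e.\ constant, equal to some $\phi_{t_j}$ with $t_j\in E_j$; as in Remark~\ref{rem}, the continuous Bessel family over $X_{at}$ is equivalent to the discrete Bessel sequence $\{\tilde\phi_j = \sqrt{\mu(E_j)}\,\phi_{t_j}\}_j$ with the same bound $B$, and a partial frame operator $S_{\phi,E'}$ for $E' = \bigcup_{j\in J'}E_j$ equals $\sum_{j\in J'}\tilde\phi_j\otimes\tilde\phi_j$. The norms satisfy $\|\tilde\phi_j\|^2 = \mu(E_j)\|\phi_{t_j}\|^2 \le \ve_0 \le \ve$. (Here one uses that sub-atoms of an atom $E_j$ carry either full or zero measure, so taking $E'\cap E_j$ to be all of $E_j$ or empty is no loss of generality for realizing elements of $\mathcal S_{at}$.) Applying the scaled version of Corollary~\ref{dlp} to the Bessel sequence $\{\tilde\phi_j\}_j$ with bound $B$ and norm-square bound $\ve$ — that is, rescaling by $1/\sqrt{B}$ to land in the bound-$1$ hypothesis, invoking Corollary~\ref{dlp}, and rescaling back, which multiplies the error by $B$ while $\ve$ becomes $\ve/B$, yielding the factor $B\cdot(\ve/B)^{1/8} = B^{7/8}\ve^{1/8}$ — gives: for every $T_{at}$ in $\mathrm{conv}(\mathcal S_{at})$ there is $S_{at}\in\mathcal S_{at}$ with $\|S_{at}-T_{at}\| \le C_0 B^{7/8}\ve^{1/8}$.

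For the non-atomic part, Theorem~\ref{lyu} gives that $\overline{\mathcal S_{na}}$ is convex, hence for every $T_{na}\in\mathrm{conv}(\mathcal S_{na})$ and every $\delta>0$ there is $S_{na}\in\mathcal S_{na}$ with $\|S_{na}-T_{na}\|<\delta$; since $\delta$ is arbitrary this contributes nothing to the final error bound (consistent with $\ve_0=0$ when $X$ is non-atomic). Finally, given $T\in\mathrm{conv}(\mathcal S)$, I would write $T = T_{at}+T_{na}$ with $T_{at}\in\mathrm{conv}(\mathcal S_{at})$, $T_{na}\in\mathrm{conv}(\mathcal S_{na})$, choose $S_{at}, S_{na}$ as above (with $\delta$ negligible), and set $S = S_{at}+S_{na} \in \mathcal S_{at}+\mathcal S_{na} = \mathcal S$; then $\|S-T\| \le \|S_{at}-T_{at}\| + \|S_{na}-T_{na}\| \le C_0 B^{7/8}\ve^{1/8}$, which is \eqref{sb}. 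The only genuine subtlety, and the step I would be most careful about, is the bookkeeping that $\mathrm{conv}(\mathcal S) = \mathrm{conv}(\mathcal S_{at})+\mathrm{conv}(\mathcal S_{na})$ as \emph{sets} of operators and that every element of $\mathcal S$ (not just of its closure) is realized as such a sum — this rests on the fact that measurable subsets of $X$ split independently across the atomic and non-atomic parts and that $\mathcal S_{at}+\mathcal S_{na}\subseteq\mathcal S$; everything else is routine scaling and triangle-inequality estimates.
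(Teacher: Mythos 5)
Your proposal is correct and takes essentially the route the paper itself sketches (it gives no detailed proof): decompose $X$ into its atomic and non-atomic parts, identify the atomic part with a countable discrete Bessel sequence $\{\tilde\phi_j\}$ with $\|\tilde\phi_j\|^2\le\ve_0$ and apply the $B$-rescaled Corollary \ref{dlp} (your scaling $B\cdot(\ve/B)^{1/8}=B^{7/8}\ve^{1/8}$ is right), and treat the non-atomic part with Theorem \ref{lyu}. The one step to tighten is the final bound: adding the non-atomic error $\delta$ to $C_0B^{7/8}\ve^{1/8}$ overshoots \eqref{sb}, so apply the discrete estimate with $\ve_0$ (or any $\ve'\in(\ve_0,\ve)$) instead of $\ve$ and use the strict hypothesis $\ve>\ve_0$ to absorb $\delta$ --- which is precisely why the statement assumes $\ve>\ve_0$ rather than $\ve\ge\ve_0$.
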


\section{Scalable frames and discretization problem} \label{S4}

In this section we present the solution of the discretization problem due to Freeman and Speegle \cite{FS}. The key result in the proof is a sampling theorem for scalable frames. The proof is a technical and brute force application of the following result on frame partitions, see \cite[Theorem 1.7]{FS} and \cite[Lemma 2]{NOU}. Our aim is outline the essential parts of this argument.

\begin{theorem}\label{fp}
There exists constants $A_0,B_0>0$ such the following holds. Every tight frame of vectors in the unit ball of a separable Hilbert space $\mathcal H$ with frame constant $\ge 1$ can be partitioned into a collection of frames of $\mathcal H$ each with lower and upper frame bounds $A_0$ and $B_0$.
\end{theorem}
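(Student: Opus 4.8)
The plan is to deduce Theorem~\ref{fp} from the discrete approximate Lyapunov theorem, Theorem~\ref{aw} (equivalently, its corollary on partitions in the spirit of Weaver's $KS_r$ conjecture), by an iterative bisection argument. The input is a tight frame $\{\phi_i\}_{i\in I}$ with $\|\phi_i\|\le 1$ and frame constant $C\ge 1$, so that $\sum_{i\in I}\phi_i\otimes\phi_i = C\,\mathbf I$; after rescaling the vectors by $C^{-1/2}$ we may as well assume we have a Parseval frame with $\|\phi_i\|^2\le \varepsilon$ for some $\varepsilon$ that we are free to shrink by subdividing. The issue, of course, is that nothing bounds $\varepsilon$ from above in the hypothesis of the theorem, so the \emph{first} step is a preprocessing reduction: we must partition $I$ into finitely many (in fact boundedly many) pieces on each of which the restricted family is still Bessel with a controlled bound and consists of \emph{small} vectors. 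This is exactly where $KS_2$/Theorem~\ref{MSS} enters — repeatedly splitting a Parseval frame into two Bessel halves drives the norms of the partial frame operators down geometrically, but the individual vector norms $\|\phi_i\|^2$ do not shrink, so instead one isolates the (necessarily few, by a counting/trace argument) indices with $\|\phi_i\|^2$ large into their own singleton or small blocks, and applies the partition theorems only to the remaining small-norm vectors.

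\medskip

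\noindent\textbf{Second step: the bisection iteration.} Once we are reduced to a Parseval (or bounded-Bessel) frame $\{\phi_i\}_{i\in I}$ with $\|\phi_i\|^2\le\varepsilon$, apply Theorem~\ref{aw} (or Corollary~\ref{ia} with $r=2$, $t_1=t_2=1/2$) to obtain a subset $I_0\subset I$ with
\[
\bigg\|\sum_{i\in I_0}\phi_i\otimes\phi_i - \tfrac12\,S\bigg\|\le C_0\,\varepsilon^{1/8},
\]
where $S$ is the frame operator; so both $\{\phi_i\}_{i\in I_0}$ and $\{\phi_i\}_{i\in I\setminus I_0}$ have frame operators within $C_0\varepsilon^{1/8}$ of $\tfrac12 S$, hence have lower bound $\ge \tfrac12 - C_0\varepsilon^{1/8}$ and upper bound $\le \tfrac12 + C_0\varepsilon^{1/8}$ times the extreme eigenvalues of $S$. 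Iterating $k$ times produces a partition into $2^k$ pieces, each a frame with bounds comparable to $2^{-k}$ times those of $S$. Stopping at the first $k$ for which $2^{-k}$ drops below a fixed threshold gives a partition into frames with \emph{absolute} lower and upper bounds $A_0,B_0$ independent of the original frame, provided the accumulated additive errors $\sum_j C_0 (2^{-j}\varepsilon)^{1/8}$ stay controlled — which they do, since the small-vector-norm bound $\varepsilon$ is inherited (indeed improved) by each sub-block, and the geometric series in $2^{-j/8}$ converges. The bookkeeping is to track the multiplicative drift of the frame bounds through the iteration and check it never degenerates; this is the technical heart, and it is the ``brute force'' the authors allude to.

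\medskip

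\noindent\textbf{The main obstacle} is the interplay between the two quantities one must control simultaneously: the \emph{size} of the individual vectors (which Theorem~\ref{aw} needs to be small, and which one cannot assume a priori) and the \emph{number of pieces} in the final partition (which must be absolutely bounded, forcing the frame bounds down to universal constants rather than merely ``comparable to the original''). Reconciling these requires the preprocessing step to not merely make vectors small but to do so while splitting off only finitely many blocks whose count does not depend on the frame; a trace/counting estimate — at most $\varepsilon^{-1}\operatorname{tr}(S)$-type bounds on how many vectors can have norm-squared exceeding a given threshold, combined with the fact that after normalizing $\operatorname{tr}(S)$ is itself controlled once we work locally — is what makes this possible. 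Once the reduction is in place, the bisection argument and the error summation are routine, so I would spend essentially all the care on Step~1 and on the verification that the frame-bound drift in Step~2 is two-sided and bounded away from $0$ and $\infty$.
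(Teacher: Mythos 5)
Your high-level plan (iterated two-way splitting driven by the MSS/Weaver partition theorems) is the same family of argument as the paper's, but the quantitative engine of your Step 2 does not close, and that is where the whole proof lives. After normalizing to a Parseval frame you have $\|\psi_i\|^2\le\delta=1/K$, and the error incurred at the $j$-th halving must be measured against the current frame bounds, which decay like $2^{-j}$, while the vector norms do \emph{not} decay --- you say exactly this in your first paragraph, yet in Step 2 you assert the norm bound is ``inherited (indeed improved)'' and write the step-$j$ error as $C_0(2^{-j}\varepsilon)^{1/8}$. Neither Theorem \ref{aw} nor Corollary \ref{ia} gives that: applied to a sub-block whose frame operator is $\approx 2^{-j}\mathbf I$, they yield an additive error $C_0\varepsilon^{1/8}$ depending only on the (unchanged) vector norms, so after $k$ halvings the accumulated error is of order $k\,\varepsilon^{1/8}$ against a target lower bound of order $2^{-k}$. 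Since universal bounds for the \emph{original} frame force you to iterate until $K2^{-k}$ is of constant size, i.e.\ $k\approx\log_2 K=\log_2(1/\delta)$ (your stopping rule ``first $k$ with $2^{-k}$ below a fixed threshold'' is stated in the wrong normalization), the error swamps the lower bound long before the stopping time. The repair is the paper's mechanism: renormalize by $S^{-1/2}$ at every stage (Lemma \ref{ou}), so the error becomes \emph{multiplicative} of size $5\sqrt{\delta/A_j}$, and then control the drift by the stopping-time analysis of Lemma \ref{sou}: these relative errors grow as $A_j$ decreases, but read backwards from the stopping time $L$ (where $A_{L+1}\approx 100\delta$) they are dominated by a geometric series, so the product $\prod_j(1+C_j)/(1-C_j)$ stays bounded and the final bounds, after scaling back by $K$, are absolute constants. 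That two-sided multiplicative bookkeeping is precisely what your convergent-series claim is standing in for, and it cannot be replaced by the additive estimate of Theorem \ref{aw}.

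Your ``main obstacle'' and preprocessing step are also misdirected. The theorem does not require (and cannot have) a number of pieces bounded independently of the frame: a tight frame with constant $K$ needs at least $K/B_0$ blocks, and in the paper the partition has $2^{L+1}\approx K$ pieces. Hence no trace-counting reduction is needed, and splitting off large-norm vectors into ``singleton or small blocks'' would actually violate the conclusion, since a single vector (or any finite family) is not a frame for an infinite-dimensional $\mathcal H$. Moreover there are no large vectors to isolate: the hypotheses ``vectors in the unit ball'' and ``frame constant $K\ge 1$'' give $\delta=1/K$ automatically after normalization, and the only case where $\delta$ is not small (say $K<100$) is trivial, because the original tight frame is already a frame with the universal bounds $1$ and $100$ and no partition is needed at all. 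So the effort you propose to spend on Step 1 is unnecessary, while the drift control in Step 2 --- the actual content of Lemmas \ref{ou} and \ref{sou} --- is missing.
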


Following \cite{NOU, OU}, we will need two lemmas in the proof of Theorem \ref{fp}.

\begin{lemma}\label{ou}
Let $I$ be at most countable index set and let $\mathcal H$ be a separable Hilbert space. Let $\{\phi_i\}_{i \in I}$ be a frame with bounds $A$ and $B$,
\begin{equation}\label{ou1}
A \mathbf I \le \sum_{i\in I} \phi_i \otimes \phi_i \le B \mathbf I \qquad\text{and}\qquad\|\phi_i\|^2 \le \delta
\quad\text{for all }i.
\end{equation}
If $A>\delta$, then there exists a partition of $I$ into subsets $I_1$ and $I_2$ such that for $k=1,2$,
\begin{equation}\label{ou2}
\frac{1-5 \sqrt{\delta/A}}2 A \mathbf I \le \sum_{i\in I_k} \phi_i \otimes \phi_i\le \frac{1+5 \sqrt{\delta/A}}2 B \mathbf I.
\end{equation}
\end{lemma}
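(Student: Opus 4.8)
The plan is to reduce Lemma~\ref{ou} to the approximate Lyapunov theorem for discrete frames, namely Theorem~\ref{aw}, applied with the trivial weights $t_i \equiv 1/2$. First I would normalize: the family $\{\phi_i\}_{i\in I}$ has frame operator $S$ with $A\mathbf I \le S \le B\mathbf I$ and $\|\phi_i\|^2 \le \delta$. Since $A > \delta$, the rescaled vectors $\widehat\phi_i := A^{-1/2}\phi_i$ form a Bessel family with bound $B/A \le $ (some bound we can control), and satisfy $\|\widehat\phi_i\|^2 \le \delta/A =: \ve$. To apply Theorem~\ref{aw} verbatim I actually want a Bessel bound exactly $1$, so I would instead rescale by $B^{-1/2}$, obtaining $\widetilde\phi_i := B^{-1/2}\phi_i$ with frame operator $\widetilde S = B^{-1}S$ satisfying $(A/B)\mathbf I \le \widetilde S \le \mathbf I$ and $\|\widetilde\phi_i\|^2 \le \delta/B \le \delta/A = \ve$. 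Then Theorem~\ref{aw} (with all $t_i = 1/2$) produces a subset $I_1 \subset I$ with
\[
\bigg\| \sum_{i\in I_1} \widetilde\phi_i \otimes \widetilde\phi_i - \tfrac12 \widetilde S \bigg\| \le C_0 \ve^{1/8}.
\]

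Next I would set $I_2 = I \setminus I_1$ and translate back. Multiplying through by $B$ gives $\big\| \sum_{i\in I_1} \phi_i\otimes\phi_i - \tfrac12 S \big\| \le C_0 B \ve^{1/8}$, and the same bound for $I_2$ since $\sum_{i\in I_2}\phi_i\otimes\phi_i = S - \sum_{i\in I_1}\phi_i\otimes\phi_i$. Hence for $k=1,2$,
\[
\tfrac12 S - C_0 B \ve^{1/8}\,\mathbf I \;\le\; \sum_{i\in I_k}\phi_i\otimes\phi_i \;\le\; \tfrac12 S + C_0 B \ve^{1/8}\,\mathbf I.
\]
Now using $A\mathbf I \le S \le B\mathbf I$ on the two sides separately yields a lower bound $\big(\tfrac{A}{2} - C_0 B\ve^{1/8}\big)\mathbf I$ and an upper bound $\big(\tfrac{B}{2} + C_0 B\ve^{1/8}\big)\mathbf I$. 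To match the stated form \eqref{ou2}, I would bound $C_0 B \ve^{1/8} = C_0 B (\delta/B)^{1/8} \le C_0 B (\delta/A)^{1/8}$ and absorb the constant: writing $\ve^{1/8} \le$ (something times) $\sqrt{\delta/A}$ is false in general for small $\delta$, so the honest statement is that the error is $O\big(B(\delta/A)^{1/8}\big)$ relative to $A$ and $B$; the precise constant $5\sqrt{\delta/A}$ in \eqref{ou2} reflects a more careful, direct argument rather than a black-box invocation of Theorem~\ref{aw}.

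This points to the main obstacle: the exponent and the explicit constant. Theorem~\ref{aw} delivers only an $\ve^{1/8}$ error with an uncontrolled universal constant, whereas Lemma~\ref{ou} asks for a clean $\sqrt{\delta/A}$-type estimate (exponent $1/2$, constant $5$). The stronger statement should instead be proved directly from Corollary~\ref{ia} with $r=2$, $t_1=t_2=1/2$: applying it to the Parseval frame $S^{-1/2}\phi_i$ (after the functional-calculus truncation as in Lemma~\ref{awl}, which here is unnecessary since $A>\delta$ makes $S$ genuinely invertible) gives a partition with $\big\|\sum_{i\in I_k} (S^{-1/2}\phi_i)\otimes(S^{-1/2}\phi_i) - \tfrac12\mathbf I\big\| \le 2\sqrt{2\delta/A} + 2\delta/A$, where $\delta/A$ is the uniform norm bound on the $S^{-1/2}\phi_i$ since $\|S^{-1/2}\phi_i\|^2 \le A^{-1}\|\phi_i\|^2 \le \delta/A$. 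Conjugating by $S^{1/2}$ and using $A\mathbf I \le S \le B\mathbf I$ then gives exactly the two-sided bound \eqref{ou2} once one checks $2\sqrt{2} + 2\sqrt{\delta/A} \le 5$ for the admissible range of $\delta/A$ (indeed $\delta/A \le 1$ suffices, and the case $\delta/A$ close to $1$ is degenerate). So the key steps in order are: (1) reduce to a Parseval frame via $S^{-1/2}$; (2) invoke Corollary~\ref{ia} with $r=2$ and equal weights; (3) conjugate back and sandwich between $A\mathbf I$ and $B\mathbf I$; (4) bookkeep the constant to reach the form $5\sqrt{\delta/A}$. The subtlety to watch is that conjugation by $S^{1/2}$ turns a bound on $\|\cdot - \tfrac12\mathbf I\|$ into a bound involving $\|S\| \le B$ on the upper side and $\|S^{-1}\|^{-1} \ge A$ on the lower side, which is precisely why the asymmetric constants $A$ and $B$ appear in \eqref{ou2}.
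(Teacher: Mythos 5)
Your final route (reduce to a Parseval frame via $S^{-1/2}$, apply an MSS-type two-block result with equal weights, conjugate back by $S^{1/2}$, and sandwich with $A\mathbf I \le S \le B\mathbf I$) is structurally the paper's proof, and you are right that the detour through Theorem \ref{aw} cannot give \eqref{ou2} because of the $\ve^{1/8}$ error and the uncontrolled constant. However, the constant bookkeeping in your final step has a genuine factor-of-two gap. The statement of Corollary \ref{ia} that you invoke gives, for $r=2$ and $t_1=t_2=\tfrac12$, a deviation of $2\sqrt{2\ve}+2\ve$ from $\tfrac12\mathbf I$ (with $\ve=\delta/A$), whereas the target \eqref{ou2}, read before conjugation, allows a deviation of only $\tfrac52\sqrt{\ve}$ from $\tfrac12\mathbf I$. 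So the inequality you propose to check, $2\sqrt2+2\sqrt{\ve}\le 5$, is not the relevant one; the relevant one is $2\sqrt{2\ve}+2\ve\le\tfrac52\sqrt{\ve}$, i.e. $2\sqrt2+2\sqrt{\ve}\le\tfrac52$, which fails for every $\ve>0$ since $2\sqrt2>\tfrac52$. As written, your argument yields \eqref{ou2} only with $5$ replaced by roughly $4\sqrt2+4\sqrt{\ve}$, which does not match the stated lemma (and the constant $5$ is used downstream in Lemma \ref{sou}).

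The repair is exactly what the paper does: apply Theorem \ref{MSS} itself (not the symmetrized Corollary \ref{ia}) with $r=2$, $t_1=t_2=\tfrac12$, to the Parseval frame $\{S^{-1/2}\phi_i\}$. The bound \eqref{mss2} carries the factor $t_k=\tfrac12$, giving $\sum_{i\in I_k}S^{-1/2}\phi_i\otimes S^{-1/2}\phi_i\le \tfrac{(1+\sqrt{2\ve})^2}{2}\mathbf I=\tfrac{1+2\sqrt{2\ve}+2\ve}{2}\mathbf I\le\tfrac{1+5\sqrt{\ve}}{2}\mathbf I$, where now the check really is $2\sqrt2+2\sqrt{\ve}\le5$, valid since $\ve=\delta/A<1$ by the hypothesis $A>\delta$. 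The matching lower bound is then free by complementation: $\sum_{i\in I_2}S^{-1/2}\phi_i\otimes S^{-1/2}\phi_i=\mathbf I-\sum_{i\in I_1}S^{-1/2}\phi_i\otimes S^{-1/2}\phi_i\ge\tfrac{1-5\sqrt{\ve}}{2}\mathbf I$, and symmetrically for $I_1$. (Equivalently, observe that in the proof of Corollary \ref{ia} the deviations in \eqref{ia2} and \eqref{ia3} are $t_k(2\sqrt{r\delta}+r\delta)$ and $(1-t_k)(2\sqrt{r\delta}+r\delta)$, so with $t_k=\tfrac12$ both carry the needed factor $\tfrac12$; only the symmetrized statement \eqref{ia1} discards it.) After this correction, your remaining steps --- no truncation as in Lemma \ref{awl} is needed because $A>\delta$ makes $S$ invertible, conjugation by $S^{1/2}$, and the sandwich producing the asymmetric $A$ and $B$ in \eqref{ou2} --- coincide with the paper's argument.
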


\begin{proof}
If $\{\phi_i\}_{i \in I}$ is a Parseval frame and $\delta<1$, then by Theorem \ref{MSS} for $t_1=t_2=1/2$,  we have a partition so that for $k=1,2$,
\[
\sum_{i\in I_k} \phi_i \otimes \phi_i \le \frac{(1+\sqrt{2\delta})^2}2 \mathbf I \le \frac{1+5\sqrt{\delta}}2 \mathbf I.
\]
Since 
\[
\mathbf I - \sum_{i\in I_1} \phi_i \otimes \phi_i  = \sum_{i\in I_2} \phi_i \otimes \phi_i ,
\]
we obtain two-sided estimate \eqref{ou2} in the special case $A=B=1$.

If $\{\phi_i\}_{i \in I}$ is a general frame, then let $S$ be its frame operator.  Note that $A \mathbf I \le S \le B \mathbf I$ and hence $B^{-1} \mathbf I \le S^{-1} \le A^{-1} \mathbf I$. Hence, $\{S^{-1/2}\phi_i\}_{i \in I}$ is a Parseval frame and 
\[
||S^{-1/2}\phi_i||^2 \le A^{-1} ||\phi_i||^2 \le \delta/A.
\]
Hence, we can apply the Parseval frame case of \eqref{ou2} and
\[
\sum_{i\in I_k} S^{-1/2} \phi_i  \otimes S^{-1/2}\phi_i = S^{-1/2} \bigg( \sum_{i\in I_k}  \phi_i  \otimes \phi_i \bigg)S^{-1/2}
\]
to deduce \eqref{ou2}.
\end{proof}

\begin{lemma}\label{sou}
Let $0<\delta<1/100$. Define sequences $\{A_j\}_{j=0}^\infty$ and $\{B_j\}_{j=0}^\infty$ inductively by 
\[
A_0=B_0=1,\qquad A_{j+1}=A_j \frac{1-5 \sqrt{\delta/A_j}}2, \qquad B_{j+1}=B_j \frac{1+5 \sqrt{\delta/B_j}}2.
\]
Then, there exists an absolute constant $C$ and an integer $L\ge 0$ such that
\begin{equation}\label{sou2}
A_j \ge 100\delta \text{ for }j \le L, \quad
25 \delta \le A_{L+1} <100\delta, \quad B_{L+1} < C A_{L+1}.
\end{equation}
\end{lemma}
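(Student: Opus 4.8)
The plan is to analyze the two recursions separately and then compare them. First I would study the sequence $\{A_j\}$. As long as $A_j \ge 100\delta$, the multiplier $\tfrac{1-5\sqrt{\delta/A_j}}{2}$ satisfies $5\sqrt{\delta/A_j} \le 5/10 = 1/2$, so $A_{j+1} \ge \tfrac14 A_j > 0$; in particular the sequence stays positive, and moreover $A_{j+1} \le \tfrac12 A_j$, so it decreases at least geometrically with ratio $1/2$. Hence there is a well-defined first index, call it $L$, at which $A_{L+1} < 100\delta$ (this index exists and is finite precisely because of the geometric decay, and $L \ge 0$). From $A_L \ge 100\delta$ and the lower bound $A_{L+1} \ge \tfrac14 A_L$ — valid since $j = L$ still satisfies $A_L \ge 100\delta$ — we get $A_{L+1} \ge 25\delta$. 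This establishes the first two assertions in \eqref{sou2}: $A_j \ge 100\delta$ for $j \le L$ and $25\delta \le A_{L+1} < 100\delta$.

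Next I would track the sequence $\{B_j\}$ over the same range $j = 0, 1, \dots, L+1$, and the goal is to show $B_j / A_j$ stays bounded by an absolute constant. The natural device is to estimate the logarithm of the ratio. Writing $r_j = B_j/A_j$, one has
\[
\frac{r_{j+1}}{r_j} = \frac{1 + 5\sqrt{\delta/B_j}}{1 - 5\sqrt{\delta/A_j}}.
\]
Since $B_j \ge A_j \ge 100\delta$ for $j \le L$, both $5\sqrt{\delta/B_j}$ and $5\sqrt{\delta/A_j}$ are at most $1/2$, so using $1 + x \le e^x$ and $1 - x \ge e^{-2x}$ for $0 \le x \le 1/2$ we obtain
\[
\frac{r_{j+1}}{r_j} \le \exp\!\big(5\sqrt{\delta/B_j} + 10\sqrt{\delta/A_j}\big) \le \exp\!\big(15\sqrt{\delta/A_j}\big).
\]
Telescoping from $0$ to $L$ and using $r_0 = 1$ gives $r_{L+1} \le \exp\big(15 \sum_{j=0}^{L} \sqrt{\delta/A_j}\big)$, so the whole problem reduces to bounding $\sum_{j=0}^{L} \sqrt{\delta/A_j}$ by an absolute constant. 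The main obstacle is exactly this sum, and here is where the geometric decay of $\{A_j\}$ pays off: going backwards from $A_L$, for each $i \ge 0$ we have $A_{L-i} \ge 2^{i} A_L \ge 2^{i}\cdot 25\delta \ge 2^i \cdot 25\delta$ — more carefully, $A_{L-i} \ge 2^i A_L \ge 2^i \cdot 25 \delta$ follows from $A_{j+1} \le \tfrac12 A_j$ applied $i$ times — hence $\sqrt{\delta/A_{L-i}} \le \sqrt{1/(25\cdot 2^i)} = \tfrac15 2^{-i/2}$, and
\[
\sum_{j=0}^{L} \sqrt{\delta/A_j} \le \frac15 \sum_{i=0}^{\infty} 2^{-i/2} = \frac{1}{5} \cdot \frac{1}{1 - 2^{-1/2}},
\]
an absolute constant. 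Therefore $r_{L+1} = B_{L+1}/A_{L+1} \le C := \exp\!\big(\tfrac{3}{1-2^{-1/2}}\big)$, which is the third assertion in \eqref{sou2}, and the lemma is proved. The condition $\delta < 1/100$ is used only to guarantee $A_0 = 1 \ge 100\delta$, so that the range $j \le L$ is nonempty and the geometric estimates kick in from the start.
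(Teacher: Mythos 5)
Your argument is correct and takes essentially the same route as the paper: both rest on the bounds $\tfrac14 A_j \le A_{j+1} \le \tfrac12 A_j$ while $A_j \ge 100\delta$, the resulting stopping index $L$ with $25\delta \le A_{L+1} < 100\delta$, and the backwards geometric growth $A_{L-i} \ge 2^i A_L$ which makes $\sqrt{\delta/A_{L-i}}$ decay like $2^{-i/2}$; the paper bounds $B_{L+1}/A_{L+1}$ by the convergent infinite product $\prod_{j\ge 0}\frac{1+2^{-1-j/2}}{1-2^{-1-j/2}}$, whereas you exponentiate and bound the corresponding geometric sum, a purely cosmetic difference. The one step you assert without proof, $B_j \ge A_j$ (needed to replace $\sqrt{\delta/B_j}$ by $\sqrt{\delta/A_j}$), is a one-line induction and is used implicitly in the paper as well, so it is worth recording but is not a gap.
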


\begin{proof}
If $A_j \ge 100 \delta$, then
\[
\frac{A_j}4 \le A_{j+1} \le \frac{A_j}2.
\]
Let $L \ge 1$ be the largest integer such that $A_L \ge 100 \delta$. For $j\le L$, let $C_j= 5 \sqrt{\delta/A_j}$. Note that $C_{L-j} < 2^{-1-j/2}$ for $j=0,\ldots,L$. Hence, by telescoping
\[
\frac{B_{L+1}}{A_{L+1}} = \prod_{j=0}^L \frac{1+C_j}{1-C_j} < C:= \prod_{j=0}^\infty \frac{1+2^{-1-j/2}}{1-2^{-1-j/2}}<\infty.
\]
This proves \eqref{sou2}.
\end{proof}

Now we are ready to prove Theorem \ref{fp}.

\begin{proof}
Suppose $\{\phi_i\}_{i\in I}$ is a tight frame with constant $K\ge 1$ such that $||\phi_i|| \le 1$ for all $i\in I$. Hence, $\psi_i = K^{-1/2} \phi_i$, $i\in I$, is a Parseval frame such that $|| \psi_i||^2 \le \delta:= 1/K$.

We shall apply Lemma \ref{ou} recursively. If $100\delta<1$, then we apply Lemma \ref{ou} to split it it into two frames $\{\psi_i\}_{i\in I_k}$, $k=1,2$, with bounds $A_1$ and $B_1$. If $100\delta<A_1$, then we apply Lemma \ref{ou} again to each frame $\{\psi_i\}_{i\in I_k}$; otherwise we stop. Let $L\ge 0$ be the stopping time from Lemma \ref{sou}. We continue applying Lemma \ref{ou} to produce a partition of $\{\psi_i\}_{i\in I}$ into $2^{L+1}$ frames with bounds $A_{L+1}$ and $B_{L+1}$. This corresponds to a partition of $\{\phi_i\}_{i\in I}$ into $2^{L+1}$ frames with bounds $A_{L+1}/\delta$ and $B_{L+1}/\delta$. By \eqref{sou2}, these bounds satisfy
 \[
 25 \le A_{L+1}/\delta, \qquad  B_{L+1}/\delta< C A_{L+1}/\delta \le 100C.
 \]
 
 If $100\delta>1$, then there is no need to apply the above procedure since $\{\phi_i\}_{i\in I}$ is a tight frame with bound $1/\delta$. Hence, it is trivially a frame with bounds $1$ and $100$. Consequently, every tight frame with constant $\ge 1$ can be partitioned into frames with bounds $1$ and $100C$.
\end{proof}

By scaling we can deduce a variant of Theorem \ref{fp} for arbitrary frames.

\begin{corollary}\label{cfp}
Let $0<N \le A \le B<\infty$. Let $\{\phi_i\}_{i\in I}$ be a frame with bounds $A$ and $B$ in a separable infinite-dimensional Hilbert space $\mathcal H$ with norms $||\phi_i||^2 \le N$ for all $i\in I$. Then, there exists a partition $I_1,\ldots, I_r$ of $I$ such that for every $k=1,\ldots,r$, $ \{\phi_i \}_{i\in I_k}$ is a frame with bounds
\begin{equation}\label{fs1}
A_0 N \qquad\text{and}\qquad  B_0 N\frac{B}A,
\end{equation}
where $A_0$ and $B_0$ are constants from Theorem \ref{fp}.
\end{corollary}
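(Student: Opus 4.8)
The plan is to reduce everything to Theorem~\ref{fp} by rescaling $\{\phi_i\}_{i\in I}$ into a tight frame of vectors in the unit ball with frame constant at least $1$, applying the partition furnished by Theorem~\ref{fp}, and then transporting the resulting frame bounds back by conjugation with the square root of the original frame operator.

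First I would let $S=\sum_{i\in I}\phi_i\otimes\phi_i$ denote the frame operator, so that $A\mathbf I\le S\le B\mathbf I$; in particular $S$ is bounded, positive, and invertible with $B^{-1}\mathbf I\le S^{-1}\le A^{-1}\mathbf I$. Then I would set $\psi_i=\sqrt{A/N}\,S^{-1/2}\phi_i$ for $i\in I$. This particular normalization is chosen so that both hypotheses of Theorem~\ref{fp} hold simultaneously: since $\{S^{-1/2}\phi_i\}_{i\in I}$ is the canonical Parseval frame associated to $\{\phi_i\}_{i\in I}$, the family $\{\psi_i\}_{i\in I}$ is tight with frame constant $A/N\ge 1$; and
\[
\|\psi_i\|^2=\frac{A}{N}\,\langle S^{-1}\phi_i,\phi_i\rangle\le\frac{A}{N}\cdot\frac1A\|\phi_i\|^2=\frac1N\|\phi_i\|^2\le 1 ,
\]
so every $\psi_i$ lies in the unit ball of $\mathcal H$.

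Next I would invoke Theorem~\ref{fp} for $\{\psi_i\}_{i\in I}$ to obtain a finite partition $I_1,\dots,I_r$ of $I$ (the number $r$ being inherited from Theorem~\ref{fp}) with
\[
A_0\mathbf I\le\sum_{i\in I_k}\psi_i\otimes\psi_i\le B_0\mathbf I\qquad\text{for }k=1,\dots,r .
\]
Since $\psi_i\otimes\psi_i=\tfrac{A}{N}\,S^{-1/2}\phi_i\otimes S^{-1/2}\phi_i$, this rewrites as $\tfrac{N}{A}A_0\mathbf I\le\sum_{i\in I_k}S^{-1/2}\phi_i\otimes S^{-1/2}\phi_i\le\tfrac{N}{A}B_0\mathbf I$. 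Conjugating this two-sided operator inequality by $S^{1/2}$ and using $\sum_{i\in I_k}\phi_i\otimes\phi_i=S^{1/2}\bigl(\sum_{i\in I_k}S^{-1/2}\phi_i\otimes S^{-1/2}\phi_i\bigr)S^{1/2}$ gives
\[
\frac{N}{A}A_0\,S\le\sum_{i\in I_k}\phi_i\otimes\phi_i\le\frac{N}{A}B_0\,S .
\]
Finally, estimating $S$ from below by $A\mathbf I$ in the left-hand inequality and from above by $B\mathbf I$ in the right-hand inequality yields exactly the frame bounds $A_0N$ and $B_0N\,B/A$ claimed in \eqref{fs1}.

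There is no serious obstacle here; the argument is essentially bookkeeping built on the already established Theorem~\ref{fp}. The only point requiring a little care is the choice of the scaling factor $\sqrt{A/N}$, which is forced by the need for $\{\psi_i\}_{i\in I}$ to consist of unit-ball vectors \emph{and} to have frame constant $\ge 1$ at the same time. One should also note that the asymmetry in the output bounds — the factor $B/A$ appearing only in the upper bound — is precisely an artifact of sandwiching $S$ between $A\mathbf I$ and $B\mathbf I$ after the conjugation, using the smaller constant on the lower side and the larger constant on the upper side.
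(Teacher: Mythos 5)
Your proposal is correct and follows essentially the same route as the paper: scale the canonical Parseval frame $\{S^{-1/2}\phi_i\}$ by $\sqrt{A/N}$ to meet the hypotheses of Theorem~\ref{fp}, partition, and transport the bounds back through $S^{1/2}$. The paper phrases the final step as an inner-product sandwich $A_0N\|\phi\|^2\le\sum_{i\in I_k}|\langle\phi,\phi_i\rangle|^2\le B_0N\tfrac{B}{A}\|\phi\|^2$ rather than your operator-inequality conjugation, but the two are the same computation.
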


\begin{proof}
Let $S$ be the frame operator of $\{\phi_i \}_{i\in I}$. Note that $A \mathbf I \le S \le B \mathbf I$ and hence $B^{-1} \mathbf I \le S^{-1} \le A^{-1} \mathbf I$. Hence, $\{S^{-1/2} \phi_i \}_{i\in I}$ is a Parseval frame with norms $||S^{-1/2} \phi_i ||^2 \le N/A$. Thus, we can apply Theorem \ref{fp} to a tight frame $\{(A/N)^{1/2} S^{-1/2} \phi_i \}_{i\in I}$ with frame constant $A/N\ge 1$, which consists of vectors in the unit ball of $\mathcal H$. That is, there exists a partition $I_1,\ldots, I_r$ of $I$ such that for every $k=1,\ldots,r$, $\{(A/N)^{1/2} S^{-1/2} \phi_i \}_{i\in I_k}$ is a frame with bounds $A_0$ and $B_0$. Therefore, for any $\phi \in \mathcal H$,
\[
\begin{aligned}
A_0 N ||\phi||^2
&\le \frac{A_0N}A ||S^{1/2}\phi||^2
 \le \sum_{i\in I_k} |\lan (N/A)^{1/2}S^{1/2} \phi, (A/N)^{1/2} S^{-1/2} \phi_i \ran|^2  \\
& = \sum_{i\in I_k}  |\lan \phi, \phi_i \ran|^2
  \le \frac{B_0N}A ||S^{1/2}\phi||^2 \le \frac{B_0BN}A ||\phi||^2.
\end{aligned}
\]
\end{proof}

Recall that $\{\phi_i\}_{i\in I}$ in $\mathcal H$ is a scalable frame if there exists a sequence of scalars $\{a_i\}_{i\in I}$ such that $\{a_i \phi_i \}_{i\in I}$ is a Parseval frame. Using Theorem \ref{fp} Freeman and Speegle \cite{FS} have derived the following sampling theorem for scalable frames. 

Classically, a sampling process describes a procedure of choosing points from a given set where every point is chosen at most once. In contrast, the sampling function $\pi: \N \to I$ in Theorem \ref{atb} is in general {\bf not} injective.

\begin{theorem}\label{atb}
There exist universal constants $A_0,B_0>0$ such that the following holds.
Let $\{\phi_i\}_{i\in I}$ be a scalable frame in a separable Hilbert space $\mathcal H$ with norms $||\phi_i||^2 \le 1$ for all $i\in I$. Then, for any $0<\ve<1$, there exists a sampling function $\pi:\N \to I$ such that $\{\phi_{\pi(n)}\}_{n\in \N}$ is a frame with bounds $A_0(1-\ve)$ and $2B_0(1+\ve)$.
\end{theorem}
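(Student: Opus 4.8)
The plan is to reduce the statement about scalable frames to the frame-partition result of Theorem \ref{fp}. First I would use the definition of scalability: since $\{\phi_i\}_{i\in I}$ is scalable, there are scalars $\{a_i\}_{i\in I}$ (which we may take nonnegative) so that $\{a_i\phi_i\}_{i\in I}$ is a Parseval frame. The subtle point is that individual $a_i$ may be large, so $\|a_i\phi_i\|$ need not be bounded by $1$. To fix this I would split each index: for each $i$ with $a_i>0$, write $a_i^2 = k_i + \theta_i$ with $k_i=\lfloor a_i^2\rfloor$ and $0\le\theta_i<1$, and replace the single vector $a_i\phi_i$ by $k_i$ copies of $\phi_i$ together with one copy of $\sqrt{\theta_i}\,\phi_i$. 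This produces a new (multi-)index set $J$ and a family $\{\tilde\phi_j\}_{j\in J}$ with $\sum_{j\in J}\tilde\phi_j\otimes\tilde\phi_j=\mathbf I$ and $\|\tilde\phi_j\|^2\le\|\phi_i\|^2\le 1$ for all $j$, i.e. a Parseval frame of vectors in the unit ball; note each $j\in J$ carries an underlying original index, giving a map $J\to I$.

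The next step is to apply Theorem \ref{fp} to $\{\tilde\phi_j\}_{j\in J}$: being a Parseval (hence tight with constant $1\ge 1$) frame of vectors in the unit ball, it partitions into a collection $\{J_r\}$ of frames, each with lower and upper bounds $A_0$ and $B_0$. Now the key observation is that if I pick one block $J_{r}$ and push it forward through the map $J\to I$, I get a sequence of original vectors $\phi_{\pi(n)}$, indexed so that $n$ ranges over $J_r$; this is exactly a sampling function $\pi$ (which will in general be non-injective, precisely because of the $k_i$ duplicated copies — this is why the theorem explicitly warns that $\pi$ need not be injective). The lower bound $A_0$ is immediately inherited. The upper bound needs a little care: for a duplicated index $i$ its $k_i$ copies of $\phi_i$ contribute $k_i\,\phi_i\otimes\phi_i\le (k_i+1)\phi_i\otimes\phi_i$, and since $a_i^2=k_i+\theta_i<k_i+1\le k_i+\theta_i+1 = a_i^2+1\le 2\max(a_i^2,1)$… more cleanly, within the block the multiple copies of a single $\phi_i$ can be collapsed at the cost of a factor at most $2$ in the frame bound (because $k_i\le a_i^2$ and also $k_i+1\le 2a_i^2$ when $a_i^2\ge 1$, while if $a_i^2<1$ then $k_i=0$ so there is only the $\sqrt{\theta_i}$-copy, contributing $\le\phi_i\otimes\phi_i$). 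This is where the factor $2B_0$ in the conclusion comes from.

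Finally the $(1\pm\ve)$ factors: the above gives a sampling function with bounds $A_0$ and $2B_0$ exactly, which is even stronger than $A_0(1-\ve)$ and $2B_0(1+\ve)$; alternatively, if one wants to keep $\pi$ genuinely countably-indexed and avoid edge effects from an infinite index set, one truncates to a large finite initial segment, applies Lemma \ref{MSS2}/Theorem \ref{MSS} and the pinball principle exactly as in the proof of Theorem \ref{MSS} to pass to the limit, absorbing the truncation error into the $\ve$. The main obstacle is the bookkeeping in the duplication step: one must verify that collapsing repeated copies of the same $\phi_i$ inside a block costs at most a factor $2$ and that the resulting $\pi:\N\to I$ genuinely enumerates a frame with the claimed two-sided bounds; everything else is a direct invocation of Theorem \ref{fp} (equivalently Corollary \ref{cfp}) and the now-standard pinball argument.
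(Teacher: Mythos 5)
Your reduction has a genuine gap at the upper-bound step. Writing $a_i^2=k_i+\theta_i$ and splitting $a_i\phi_i$ into $k_i$ full copies of $\phi_i$ plus one fractional copy $\sqrt{\theta_i}\,\phi_i$ does produce a Parseval frame of vectors in the unit ball, and pushing a block of the Theorem \ref{fp} partition forward to $I$ does preserve the lower bound (each replacement $\sqrt{\theta_i}\,\phi_i\mapsto\phi_i$ only increases the corresponding rank-one term). But the claimed ``factor at most $2$'' for the upper bound fails precisely when a block member is a fractional copy with small $\theta_i$ and no full copy of the same index accompanies it: that term jumps from $\theta_i\,\phi_i\otimes\phi_i$ to $\phi_i\otimes\phi_i$, a factor $1/\theta_i$ which is not controlled. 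Since a scalable frame may have all $a_i^2<1$ (so $k_i=0$ for every $i$, and every element of every block is a fractional copy), a block can consist entirely of many near-parallel unit vectors carrying tiny weights $\theta_i$; replacing each by the unscaled vector destroys the Bessel bound altogether (the pushed-forward family need not even be Bessel), while discarding the small-$\theta_i$ terms can destroy the lower bound $A_0$. Your per-term estimate ``$\le\phi_i\otimes\phi_i$'' is true but does not sum to anything like $B_0\mathbf I$.

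The paper avoids this by equalizing the weights \emph{before} partitioning: on a finite subset it sets $\eta=\inf_i|a_i|^2>0$ and splits each $a_i\phi_i$ into $N_i=\lceil K|a_i|^2/\eta\rceil$ \emph{equal} pieces $\frac{a_i}{\sqrt{N_i}}\phi_i$, so every coefficient lies between $\sqrt{\eta/(K+1)}$ and $\sqrt{\eta/K}$; then the uniformly scaled system $\{\sqrt{\eta/K}\,\phi_{\kappa(n)}\}$ has frame bounds within a factor $(K+1)/K$ of the weighted one, Corollary \ref{cfp} is applied to it, and dividing out the common scalar $\eta/K$ turns a block into an unweighted frame $\{\phi_{\kappa(n)}\}_{n\in I_k}$ with bounds $A_0$ and $B_0(1+\ve)$. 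It is this uniform comparability of the coefficients, which your $k_i+\theta_i$ splitting does not provide, that makes the weighted-to-unweighted passage harmless. Note also that in infinite dimensions $\inf_i|a_i|^2$ may be $0$, so the equalization cannot be done globally; the paper's infinite-dimensional proof is a block decomposition into finite-dimensional subspaces $\mathcal H_{M_r}\oplus\cdots\oplus\mathcal H_{N_r}$ with overlaps, and it is these overlaps (not collapsing duplicate copies) that account for the $2$ in $2B_0(1+\ve)$ and for the $\ve$-losses in the statement; the pinball principle you invoke does not substitute for that step.
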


\begin{remark}
The role of $\ve>0$ in the formulation of Theorem \ref{atb} is not essential. For example, taking $\ve=1/2$ yields a frame $\{\phi_{\pi(n)}\}_{n\in \N}$ with bounds $A_0/2$ and $3B_0$. Here, $A_0$ and $B_0$ are the same constants as in Theorem \ref{fp}. Hence, the above formulation merely reflects the explicit dependence of frame bounds on these constants as in \cite{FS}.
\end{remark}

\begin{proof}[finite dimensional case]
It is instructive to show Theorem \ref{atb} for a finite dimensional space $\mathcal H$ first. In this case a sampling function $\pi$ is defined on a finite subset of $\N$. Choose a finite subset $I' \subset I$ such that $\{a_i \phi_i \}_{i\in I'}$ is a frame with bounds $1-\ve/2$ and $1$, and $a_i \ne 0$ for all $i\in I'$. Our goal is to reduce to the case when all coefficients $a_i$ are approximately equal.

Let $\eta = \inf_{i\in I'} |a_i|^2>0$. Let $K\in \N$ be a parameter (to be determined later). Then, we replace each element $a_i \phi_i$, $i \in I'$, by a finite collection of vectors
\[
\genfrac{}{}{0pt}{0}{\underbrace{\frac{a_i}{\sqrt{N_i}} \phi_i, \ldots, \frac{a_i}{\sqrt{N_i}} \phi_i}}{N_i} \qquad\text{where } N_i= \lceil K |a_i|^2/\eta \rceil.
\]
More precisely, let $N= \sum_{i\in I'} N_i$ and let $\kappa: [N] \to I'$ be a mapping such that each value $i\in I'$ is taken precisely $N_i$ times.
This yields a new collection of vectors $\{\phi_{\kappa(n)}\}_{n\in [N]}$ in which each vector $\phi_i$ is repeated $N_i$ times and a corresponding sequence $\{b_n\}_{n\in [N]}$, where $b_n=a_{\kappa(n)}/\sqrt{N_{\kappa(n)}}$. By our construction, we have  
\begin{equation}\label{atb2}
\frac{\eta}{K+1} \le \sup_{n\in [N]} |b_n|^2 \le \frac\eta K.
\end{equation}
Moreover, the frame operator corresponding to $\{a_i \phi_i \}_{i\in I'}$ is the same as the frame operator of $\{b_n \phi_{\kappa(n)} \}_{n\in [N]}$. We shall apply Corollary \ref{cfp} to a frame $\{\sqrt{\eta/K} \phi_{\kappa(n)} \}_{n\in [N]}$. By \eqref{atb2}, its frame bounds are given by
\[
\begin{aligned}
(1-\ve/2)\mathbf I 
& \le \sum_{n\in [N]} |b_n|^2 \phi_{\kappa(n)}  \otimes \phi_{\kappa(n)} 
\le
\frac{\eta}K \sum_{n\in [N]} \phi_{\kappa(n)} \otimes  \phi_{\kappa(n)} 
\\
&\le \frac{K+1}K \sum_{n\in [N]} |b_n|^2 \phi_{\kappa(n)} \otimes \phi_{\kappa(n)} \le  \frac{K+1}K \mathbf I.
\end{aligned}
\]
If $K\in \N$ satisfies $\eta/K \le 1-\ve/2$, then Corollary \ref{cfp} yields a partition of $[N]$ into subsets $I_1,\ldots,I_r$ such that each $\{\sqrt{\eta/K} \phi_{\kappa(n)} \}_{n\in I_k}$, $k=1,\ldots, r$, is a frame with bounds
\[
A_0 \frac{\eta}K \qquad\text{and}\qquad  B_0 \frac{\eta}K \frac{K+1}{K(1-\ve/2)}.
\]
Now choose $K\in \N$ large enough so that $\frac{K+1}{K(1-\ve/2)} \le 1+\ve$. Consequently, each collection $\{\phi_{\kappa(n)} \}_{n\in I_k}$, $k=1,\ldots, r$, is a frame with bounds $A_0$ and $B_0(1+\ve)$. Hence, the mapping $\pi: I_1 \to I$ given by restricting $\kappa$ to $I_1$ is the required sampling function.
\end{proof}

Note that in the finite dimensional case we have obtained a better frame upper bound and we have not used the full strength of Corollary \ref{cfp}. The proof of the infinite dimensional case of Theorem \ref{atb} is quite involved and technical. Hence, we only present its main steps.

\begin{proof}[outline of the infinite dimensional case]
Suppose $\{\phi_i\}_{i\in I}$ is a scalable frame and $\{a_i\}_{i\in I}$ is the corresponding sequence of coefficients such that $\{a_i \phi_i\}_{i\in I}$ is a Parseval frame in $\mathcal H$. Since $\mathcal H$ is infinite dimensional, we may assume that $I=\N$ and all vectors $\phi_i$ are non-zero.

Let $\{\ve_k\}_{k\in\N}$ be a sequence of positive numbers (to determined later). We shall construct a sequence of orthogonal finite dimensional spaces $\{\mathcal H_k\}_{k\in \N}$ such that $\bigoplus_{k\in \N} \mathcal H_k = \mathcal H$, and an increasing sequence of natural numbers $\{K_i\}_{i\in \N}$ by the following inductive procedure. Let $\mathcal H_1=\{0\}$ be the trivial space and $K_1=1$.  Assume we have already constructed subspaces $\mathcal H_1,\ldots,\mathcal H_n$ and natural numbers $K_1,\ldots,K_{n}$, $n\ge 1$. Then,
\begin{itemize}
\item define a subspace
\begin{equation}\label{hn}
\mathcal H_{n+1} = \spa \{ P_{(\mathcal H_1  \oplus \ldots \oplus \mathcal H_n )^\perp} \phi_i : 1 \le i \le K_n\},
\end{equation}
\item choose $K_{n+1}>K_n\in \N$ large enough so that
\begin{equation}\label{in-0}
\{ a_i P_{\mathcal H_1 \oplus \ldots \oplus \mathcal H_{n+1}} \phi_i\}_{i>K_{n+1}} \text{ is Bessel with bound } \ve_{n+1},
\end{equation}
\item and repeat the above process ad infinitum.
\end{itemize}

Since $\{a_i P_{\mathcal H_1 \oplus \ldots \oplus \mathcal H_n} \phi_i\}_{i\in \N}$ is a Parseval frame in 
$\mathcal H_1 \oplus \ldots \oplus \mathcal H_n$, by \eqref{in-0} for any $n\in \N$ we have
\begin{multline}\label{in0}
\{ a_i P_{\mathcal H_1 \oplus \ldots \oplus \mathcal H_n} \phi_i\}_{i=1}^{K_{n}} \text{ is a frame in }\mathcal H_1 \oplus \ldots \oplus \mathcal H_n 
\\
\text{ with bounds } 1- \ve_n \text{ and } 1.
\end{multline}
By \eqref{hn}
\begin{equation}\label{in1}
\phi_i \in \mathcal H_1 \oplus \ldots \oplus \mathcal H_{n+1} \qquad\text{for all } i =1,\ldots,K_n.
\end{equation}
Thus, by \eqref{in0}, for any $1\le m\le n\in \N$,
\begin{multline}\label{in2}
\{ a_i P_{ \mathcal H_m \oplus \ldots \oplus \mathcal H_n} \phi_i\}_{i=K_{m-2}+1}^{K_n} \text{ is a frame in }\mathcal H_m \oplus \ldots \oplus \mathcal H_n
\\
 \text{ with bounds } 1- \ve_n \text{ and } 1.
\end{multline}
Here, we use the convention that $K_{-1}=K_0=0$. 

The spaces $\{\mathcal H_k\}_{k\in \N}$ are building blocks in constructing a sampling frame. First we group these spaces into blocks with overlaps
$\bigoplus_{k=M_r}^{N_r} \mathcal H_k$ for appropriate increasing sequences $\{M_r\}_{r\in \N}$ and $\{N_r\}_{r\in \N}$ of integers with $M_1=1$ such that  consecutive intervals $[M_r,N_r]$ and $[M_{r+1},N_{r+1}]$ have significant overlaps, but intervals $[M_r,N_r]$ and $[M_{r+2},N_{r+2}]$ are disjoint. An elaborate argument using \eqref{in2} shows the existence of a sampling function $\pi_r$, $r\ge 1$, defined on a finite set $I_r$ with values in $(K_{M_r-2},K_{N_r}]$ such that
\begin{multline}
\{  P_{\mathcal H_{M_r} \oplus \ldots \oplus \mathcal H_{N_r}} \phi_{\pi_r(i)} \}_{i\in I_r} \text{ is a frame in }\mathcal H_{M_r} \oplus \ldots \oplus \mathcal H_{N_r}
\\
 \text{ with bounds } A_0 \text{ and } B_0(1+\ve).
\end{multline}
This part uses Corollary \ref{cfp} in an essential way as in the proof of finite dimensional case of Theorem \ref{atb}. Moreover, for appropriate choice of a sequence $\{\ve_k\}_{k\in\N}$, one can control the interaction between consecutive blocks $\bigoplus_{k=M_r}^{N_r} \mathcal H_k$ to deduce that the vectors $\{\phi_{\pi_r(i)} \}_{i\in I_r}$ do not interfere too much beyond these blocks. 
 Hence, roughly speaking $\{\phi_{\pi_r(i)} \}_{i\in I_r}$ forms a frame in $\bigoplus_{k=M_r}^{N_r} \mathcal H_k$ with bounds  $A_0(1-\ve) \text{ and } B_0(1+\ve)$. The hardest and most technical part is showing the lower frame bound which necessitates sufficiently large overlaps between consecutive intervals $[M_r,N_r]$ and $[M_{r+1},N_{r+1}]$. 

Now it remains to put these frames together by defining a global sampling function $\pi$ defined on a disjoint union $I_\infty=\bigcup_{r\in\N} I_r$ by $\pi(i)=\pi_r(i)$ if $i\in I_r$. Due to overlaps the upper frame bound of $\{\phi_{\pi(i)}\}_{i\in I_\infty}$ bumps to $2B_0(1+\ve)$ with the lower bound staying the same at $A_0(1-\ve)$. This completes an outline of the proof of Theorem \ref{atb}.
\end{proof}

By scaling Theorem \ref{atb} we obtain the following corollary. The proof of Corollary \ref{fs} mimics that 
of Corollary \ref{cfp}.

\begin{corollary}\label{fs}
Let $0<A \le B<\infty$ and $N>0$. Let $\{\phi_i\}_{i\in I}$ be a sequence of vectors in a separable infinite dimensional Hilbert space $\mathcal H$ with norms $||\phi_i||^2 \le N$ for all $i\in I$. Suppose there exists scalars $\{a_i\}_{i\in I}$ such that $\{a_i \phi_i \}_{i\in I}$ is a frame with bounds $A$ and $B$. Then, for any $\ve>0$, there exists a sampling function $\pi:\N \to I$ such that $\{\phi_{\pi(n)}\}_{n\in \N}$ is a frame with bounds $A_0N(1-\ve)$ and $2B_0N\frac{B}{A}(1+\ve)$.
\end{corollary}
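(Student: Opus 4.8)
The plan is to follow the same normalization trick that was used to derive Corollary \ref{cfp} from Theorem \ref{fp}: pass to the frame operator of $\{a_i\phi_i\}_{i\in I}$, whiten it, and then rescale so that the hypotheses of Theorem \ref{atb} (vectors in the unit ball, scalable frame) are met exactly. Concretely, let $S$ be the frame operator of $\{a_i\phi_i\}_{i\in I}$, so that $A\mathbf I \le S \le B\mathbf I$ and hence $B^{-1}\mathbf I \le S^{-1} \le A^{-1}\mathbf I$. Then $\{a_i S^{-1/2}\phi_i\}_{i\in I}$ is a Parseval frame, which means $\{S^{-1/2}\phi_i\}_{i\in I}$ is a scalable frame (with scalars $a_i$). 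The norms satisfy $\|S^{-1/2}\phi_i\|^2 \le A^{-1}\|\phi_i\|^2 \le N/A$; to land in the unit ball I pass to $\{(A/N)^{1/2}S^{-1/2}\phi_i\}_{i\in I}$, whose vectors have squared norm $\le 1$, and which is still scalable (rescale the $a_i$ by $(N/A)^{1/2}$).

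Next I apply Theorem \ref{atb} to this scalable frame of unit-ball vectors: for the given $\ve>0$ there is a sampling function $\pi:\N\to I$ so that $\{(A/N)^{1/2}S^{-1/2}\phi_{\pi(n)}\}_{n\in\N}$ is a frame for $\mathcal H$ with bounds $A_0(1-\ve)$ and $2B_0(1+\ve)$. It remains to transfer this back to $\{\phi_{\pi(n)}\}_{n\in\N}$. For any $\phi\in\mathcal H$ write $g = (N/A)^{1/2}S^{1/2}\phi$, so that $\lan \phi,\phi_{\pi(n)}\ran = \lan g, (A/N)^{1/2}S^{-1/2}\phi_{\pi(n)}\ran$, hence
\[
A_0(1-\ve)\|g\|^2 \le \sum_{n\in\N} |\lan \phi,\phi_{\pi(n)}\ran|^2 \le 2B_0(1+\ve)\|g\|^2.
\]
Finally I estimate $\|g\|^2 = (N/A)\|S^{1/2}\phi\|^2$ from above by $(N/A)\cdot B\|\phi\|^2 = (NB/A)\|\phi\|^2$ for the upper bound, and from below by $(N/A)\cdot A\|\phi\|^2 = N\|\phi\|^2$ for the lower bound, using $A\mathbf I\le S\le B\mathbf I$. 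This yields lower frame bound $A_0 N(1-\ve)$ and upper frame bound $2B_0 N\frac{B}{A}(1+\ve)$, exactly as claimed.

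There is essentially no obstacle here: the corollary is a pure scaling consequence of Theorem \ref{atb}, and the only thing to be careful about is bookkeeping — which direction of the inequality $A\mathbf I\le S\le B\mathbf I$ is used for which frame bound, and that multiplying the scalars $a_i$ by a nonzero constant does not disturb scalability. The one genuinely substantive input, the sampling theorem for scalable frames (Theorem \ref{atb}), is assumed.
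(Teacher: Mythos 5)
Your proposal is correct and follows essentially the same route as the paper: normalize by the frame operator $S$ of $\{a_i\phi_i\}$, apply Theorem \ref{atb} to the unit-ball scalable frame $\{(A/N)^{1/2}S^{-1/2}\phi_i\}$, and transfer the bounds back using $A\mathbf I\le S\le B\mathbf I$. The bookkeeping of which inequality feeds which frame bound matches the paper's computation exactly.
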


\begin{proof}
Let $S$ be the frame operator of $\{a_i \phi_i \}_{i\in I}$. Then, $\{a_i S^{-1/2} \phi_i \}_{i\in I}$ is a Parseval frame and $||S^{-1/2} \phi_i ||^2 \le N/A$. Applying Theorem \ref{atb} for a scalable frame $\{ (A/N)^{1/2} S^{-1/2} \phi_i \}_{i\in I}$, which consists of vectors in the unit ball of $\mathcal H$, yields a sampling function $\pi: \N \to I$ such that $\{ (A/N)^{1/2} S^{-1/2} \phi_{\pi(n)} \}_{n\in \N}$ is a frame with bounds $A_0(1-\ve)$ and $2B_0(1+\ve)$. Therefore, for any $\phi \in \mathcal H$,
\[
\begin{aligned}
&A_0 N (1-\ve) ||\phi||^2
\le A_0 (1-\ve) \frac{N}{A} ||S^{1/2}\phi||^2
\\
& \le \sum_{n\in \N} |\lan (N/A)^{1/2}S^{1/2} \phi, (A/N)^{1/2} S^{-1/2} \phi_{\pi(n)} \ran|^2   = \sum_{n\in \N}  |\lan \phi, \phi_{\pi(n)} \ran|^2 
\\
&  \le 2B_0(1+\ve) \frac{N}{A} ||S^{1/2}\phi||^2 \le \frac{2B_0BN(1+\ve)}A ||\phi||^2.
\end{aligned}
\]
\end{proof}

We are now ready to prove the sampling theorem for bounded continuous frames due to Freeman and Speegle \cite[Theorem 5.7]{FS}.

\begin{theorem}\label{db} Let $(X,\mu)$ be a measure space and let $\mathcal H$ be a separable Hilbert space.
Suppose that $\{\phi_t\}_{t\in X}$ is a continuous frame in $\mathcal H$ with frame bounds $A$ and $B$, which is bounded by $N$, i.e., \eqref{cf2} holds. Then, there exists a sequence $\{t_n\}_{n\in I}$ in $X$, where $I\subset \N$, such that $\{\phi_{t_n} \}_{n\in I}$ is a frame with bounds $A_0N$ and $3B_0N\frac{B}{A}$, where $A_0$ and $B_0$ are constants from Theorem \ref{fp}.
\end{theorem}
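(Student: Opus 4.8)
The idea is to reduce Theorem~\ref{db} to the sampling theorem for scalable frames, Corollary~\ref{fs}, by first discretizing the continuous frame into a scalable discrete frame. The two main reductions are: (1) pass from a continuous frame on $(X,\mu)$ to a discrete scalable frame with the same frame operator (up to controlled error), and (2) feed the result into Corollary~\ref{fs} and track the constants. The non-injectivity of the sampling function $\pi$ in Corollary~\ref{fs} is exactly what we need, since two distinct $n$ may correspond to the same sampled point $t\in X$.

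\smallskip
First I would invoke Proposition~\ref{p1} and Lemma~\ref{approx}: given $\ve>0$, replace $\{\phi_t\}_{t\in X}$ by a continuous Bessel family $\{\psi_t\}_{t\in X}$ taking countably many values, with $\|S_{\phi,X}-S_{\psi,X}\|<\ve$ (applying \eqref{ap2} with $\tau\equiv 1$). For $\ve$ small enough relative to the lower bound $A$, the family $\{\psi_t\}_{t\in X}$ is still a continuous frame, with bounds close to $A$ and $B$. By Remark~\ref{rem}, $\{\psi_t\}_{t\in X}$ is equivalent, as far as the frame operator is concerned, to the discrete frame $\{\tilde\phi_n\}_{n\in\N}$ where $\tilde\phi_n=\sqrt{\mu(X_n)}\,\phi_{t_n}$, $t_n\in X_n$, and $\{X_n\}_{n\in\N}$ is the associated partition of $X$. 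Crucially, since $\{\phi_t\}$ is bounded by $N$ via \eqref{cf2}, we have $\|\tilde\phi_n\|^2=\mu(X_n)\|\phi_{t_n}\|^2\le \mu(X_n)N$; refining the partition $\{X_n\}$ if necessary so that $\mu(X_n)\le 1$ for every $n$ (possible because each relevant $X_n$ has finite measure), we get $\|\tilde\phi_n\|^2\le N$ for all $n$. Now $\{\tilde\phi_n\}_{n\in\N}$ is a discrete frame with bounds close to $A$ and $B$ whose elements have norm-squares bounded by $N$.

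\smallskip
Next, observe that $\{\tilde\phi_n\}_{n\in\N}$ is scalable: choosing $a_n=(\mu(X_n))^{-1/2}$ when $\mu(X_n)\ne 0$ (and $a_n=0$ otherwise) gives $a_n\tilde\phi_n=\phi_{t_n}$, and by \eqref{rem3} together with the fact that $\{\psi_t\}$ is a frame, the family $\{a_n\tilde\phi_n\}_{n\in\N}=\{\phi_{t_n}\}_{n\in\N}$ is a frame with bounds close to $A$ and $B$. Wait---more carefully, what one actually wants for Corollary~\ref{fs} is: a sequence of vectors with norms bounded by $N$ which becomes a frame with bounds $A',B'$ after scaling. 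Here the vectors are $\phi_{t_n}$ themselves (norm-squares $\le N$), and after scaling by $\sqrt{\mu(X_n)}$ they become $\tilde\phi_n$, whose frame bounds $A',B'$ are close to $A,B$. So apply Corollary~\ref{fs} to $\{\phi_{t_n}\}_{n\in\N}$ with scalars $\{\sqrt{\mu(X_n)}\}$: for any $\ve'>0$ there is a sampling function $\pi:\N\to\N$ such that $\{\phi_{t_{\pi(m)}}\}_{m\in\N}$ is a frame with bounds $A_0N(1-\ve')$ and $2B_0N\tfrac{B'}{A'}(1+\ve')$. Setting $s_m=t_{\pi(m)}\in X$ gives the desired sampled sequence in $X$, and relabelling $I\subset\N$ to drop indices where $\pi$ lands on a null $X_n$ handles the trivial zero vectors.

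\smallskip
Finally I would collect the constants. As $\ve\to 0$ the ratio $B'/A'$ tends to $B/A$ and the lower bound tends to $A_0N$; choosing $\ve$ and $\ve'$ small enough we can guarantee bounds $A_0N$ and $3B_0N\tfrac{B}{A}$ (the slack from $2B_0$ to $3B_0$ absorbs the $(1+\ve')$ factor and the perturbation $B'/A'\to B/A$, exactly as the statement allows). The main obstacle is the bookkeeping in the reduction step: one must be careful that the partition $\{X_n\}$ can simultaneously be taken a refinement of the one from Lemma~\ref{approx} \emph{and} satisfy $\mu(X_n)\le 1$, and that the frame (not merely Bessel) property survives the passage $\phi\rightsquigarrow\psi\rightsquigarrow\{\tilde\phi_n\}$; this requires $\ve$ to be chosen after $A$ is known, which is legitimate since $A>0$ by hypothesis. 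One should also note that the infinite-dimensionality hypothesis in Corollary~\ref{fs} is needed; if $\mathcal H$ is finite dimensional one instead uses the finite-dimensional case of Theorem~\ref{atb} (via Corollary~\ref{cfp}) along the same lines, with the slightly better upper constant.
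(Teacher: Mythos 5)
Your proposal follows essentially the same route as the paper: use Lemma \ref{approx} and Remark \ref{rem} to sample the continuous frame into a scalable discrete family $\{\phi_{t_n}\}$ with scalars $\sqrt{\mu(X_n)}$ and frame bounds close to $A$ and $B$ after scaling, then feed this into Corollary \ref{fs} and absorb the perturbations into the factor $3B_0$. (Your detour about refining the partition so that $\mu(X_n)\le 1$ is unnecessary, as you yourself notice: the vectors handed to Corollary \ref{fs} are the $\phi_{t_n}$, which satisfy $\|\phi_{t_n}\|^2\le N$ directly from \eqref{cf2}.)

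The one place where your bookkeeping does not quite deliver the stated conclusion is the lower bound. Corollary \ref{fs} applied with norm bound $N$ gives the lower frame bound $A_0N(1-\ve')$, and since $\ve'>0$ is required, letting $\ve'\to 0$ only shows that any bound strictly below $A_0N$ is achievable; the exact constant $A_0N$ claimed in Theorem \ref{db} is never attained this way, whereas the slack in the upper bound (from $2B_0$ to $3B_0$) has no counterpart on the lower side. The paper sidesteps this by applying Corollary \ref{fs} with the inflated norm bound $N/(1-\ve)$ (legitimate, since the norm hypothesis is only an upper bound), so that the factor $(1-\ve)$ in the corollary's lower bound cancels and one gets exactly $A_0N$, while the upper bound becomes $2B_0N\,B(1+\ve)^2/\bigl(A(1-\ve)^2\bigr)$, which is at most $3B_0NB/A$ for small $\ve$. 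With that one-line adjustment your argument coincides with the paper's proof.
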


\begin{proof}
We shall prove Theorem \ref{db} under the assumption that $\mathcal H$ is infinite dimensional and $I=\N$. A finite dimensional case is a simple modification of the following argument, where $I \subset \N$ is finite.

By Lemma \ref{approx} and Remark \ref{rem}, for every $\ve>0$, we can find a partition $\{X_n\}_{n\in\N}$ of $X$ and a sequence $\{t_n\}_{n\in \N}$ in  $X$ such that 
\begin{equation}\label{db1}
\{a_n \phi_{t_n}\}_{n\in \N},
\qquad\text{where } a_n = \sqrt{\mu(X_n)}
\end{equation}
is a frame with frame bounds $A(1-\ve)$ and $B(1+\ve)$. In a case when $a_n=\infty$, we necessarily have $\phi_{t_n}=0$, so we can simply ignore this term. Therefore, any continuous frame can be sampled by a scalable frame \eqref{db1} with nearly the same frame bounds. Note that the boundedness assumption \eqref{cf2} was not employed so far.

Next we apply Corollary \ref{fs} to the frame \eqref{db1} with a trivial norm bound  $||\phi_{t_n}||^2 \le N/(1-\ve)$. Hence, there exists a sampling function $\pi: \N \to \N$ such that $\{\phi_{t_{\pi(n)}}\}_{n\in \N}$ is a frame with bounds $A_0N$ and $2B_0N \frac{B(1+\ve)^2}{A(1-\ve)^2}$. Choosing sufficiently small $\ve>0$ shows that $\{t_{\pi(n)}\}_{n\in \N}$ is the required sampling sequence.
\end{proof}

The solution of the discretization problem by Freeman and Speegle \cite{FS} takes the following form.

\begin{theorem}
Let $X$ be a measurable space in which every singleton is measurable. Let $\mathcal H$ be a separable Hilbert space. Let $\phi: X \to \mathcal H$ be measurable. Then, the following are equivalent:
\begin{enumerate}[(i)]
\item 
there exists a sampling sequence $\{t_i\}_{i\in I}$ in $X$, where $I \subset \N$, such that $\{\phi_{t_i}\}_{i\in I}$ is a frame in $\mathcal H$,
\item there exists a positive, $\sigma$-finite measure $\nu$ on $X$ so that $\phi$ is a continuous frame in $\mathcal H$ with respect to $\nu$, which is bounded $\nu$-almost everywhere.
\end{enumerate}
\end{theorem}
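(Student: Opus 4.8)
The plan is to prove the two implications separately, with the harder direction being (ii)$\Rightarrow$(i), which is exactly the content of the sampling theorem for bounded continuous frames, Theorem~\ref{db}.

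For the direction (ii)$\Rightarrow$(i): suppose $\phi$ is a continuous frame for $\mathcal H$ with respect to some positive $\sigma$-finite measure $\nu$, with frame bounds $A$ and $B$, and bounded $\nu$-a.e. by some $N$. The bound $\|\phi_t\|^2 \le N$ holds for $\nu$-a.e.\ $t$, and since modifying $\phi$ on a $\nu$-null set does not change any of the integrals in \eqref{cf1}, I would first replace $\phi$ by a version that satisfies $\|\phi_t\|^2 \le N$ for \emph{every} $t \in X$ (setting $\phi_t = 0$ on the exceptional null set). This does not affect whether the resulting sampled family is a frame, because the sampling sequence we will produce lands inside the support where the bound already held. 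Then Theorem~\ref{db} applies directly to $(X,\nu)$ and $\{\phi_t\}_{t\in X}$, producing a sampling sequence $\{t_n\}_{n\in I}$, $I \subset \N$, such that $\{\phi_{t_n}\}_{n\in I}$ is a frame with bounds $A_0 N$ and $3 B_0 N \frac{B}{A}$. In particular it is a frame, which is (i). (One small point to check: if we perturbed $\phi$ on a null set, the $t_n$'s produced by the proof of Theorem~\ref{db} come from Lemma~\ref{approx}, which only sees $\phi$ up to a.e.\ equality; choosing representatives $t_n$ outside the null set where needed causes no harm since each $X_n$ in the partition has positive or zero measure and we may discard the null pieces.)

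For the direction (i)$\Rightarrow$(ii): suppose $\{t_i\}_{i\in I}$, $I \subset \N$, is a sampling sequence with $\{\phi_{t_i}\}_{i\in I}$ a frame for $\mathcal H$ with bounds $A \le B$. The idea is to build a purely atomic measure $\nu$ on $X$ supported on the countable set $\{t_i : i \in I\}$. Define $\nu = \sum_{i\in I} c_i \delta_{t_i}$ for suitable weights $c_i > 0$; here a subtlety is that the points $t_i$ need not be distinct (the sampling function is not injective), so one groups indices by the value $t_i$ takes and sets the mass at a point $x$ to be $\sum_{i : t_i = x} c_i$. Taking all $c_i = 1$ (or, if one wants $\|\phi\|$ to be bounded away from a prescribed constant, $c_i$ depending on $\|\phi_{t_i}\|$), we get
\[
\int_X |\langle f, \phi_t\rangle|^2 \, d\nu(t) = \sum_{i\in I} |\langle f, \phi_{t_i}\rangle|^2,
\]
which lies between $A\|f\|^2$ and $B\|f\|^2$, so $\phi$ is a continuous frame with respect to $\nu$. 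This $\nu$ is $\sigma$-finite (it is a countable sum of finite measures) and positive. For the boundedness $\nu$-a.e.: the measurable function $t \mapsto \|\phi_t\|^2$ is finite on the support of $\nu$, since each $\phi_{t_i}$ is a fixed vector in $\mathcal H$; hence $\phi$ is automatically bounded $\nu$-almost everywhere (indeed on the countable support $\{t_i\}$, where the sup of $\|\phi_{t_i}\|^2$ is a countable sup — but note that to be bounded we should observe that a single-point-mass measure only "sees" countably many values, and $\nu$-a.e.\ boundedness only requires a bound off a $\nu$-null set; in the worst case where the $\|\phi_{t_i}\|$ are unbounded one instead rescales, replacing $c_i$ by $c_i / \max(1, \|\phi_{t_i}\|^2)$ and noting the frame property is preserved with adjusted bounds). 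Measurability of $\phi$ is part of the hypothesis, and measurability of singletons guarantees $\nu$ is a well-defined Borel-type measure on $X$.

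The main obstacle is entirely in the (ii)$\Rightarrow$(i) direction, and it has already been surmounted: it is the proof of Theorem~\ref{db}, which rests on the sampling theorem for scalable frames (Theorem~\ref{atb} and Corollary~\ref{fs}), and ultimately on the solution of Weaver's $KS_r$ conjecture via Theorem~\ref{thmp}. The (i)$\Rightarrow$(ii) direction is essentially bookkeeping about atomic measures; the only genuine care needed there is handling the non-injectivity of the sampling function and, if one insists on a genuinely bounded (not merely a.e.-bounded) frame function, a harmless rescaling of the atomic weights.
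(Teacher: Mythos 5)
Your overall route coincides with the paper's: the implication (ii)$\Rightarrow$(i) is exactly an application of Theorem~\ref{db} (your care about upgrading $\nu$-a.e.\ boundedness to everywhere boundedness and about choosing the representatives $t_n$ off the exceptional null set is fine, and in fact more explicit than the paper, which simply cites Theorem~\ref{db}), and the implication (i)$\Rightarrow$(ii) is the counting measure $\nu=\sum_{i\in I}\delta_{t_i}$ on the sampled points, with multiplicities handled exactly as in the paper.

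The one genuine flaw is your treatment of $\nu$-a.e.\ boundedness in (i)$\Rightarrow$(ii). The fallback you propose for the ``worst case'' of unbounded $\|\phi_{t_i}\|$ --- replacing $c_i$ by $c_i/\max(1,\|\phi_{t_i}\|^2)$ --- does not work, for two reasons. First, rescaling strictly positive weights does not change which sets are $\nu$-null, so it cannot improve the essential bound of $t\mapsto\|\phi_t\|^2$; condition (ii) is about $\phi$, not about the weights. Second, the claim that ``the frame property is preserved with adjusted bounds'' is false in general: the integral becomes $\sum_i c_i'\,|\langle f,\phi_{t_i}\rangle|^2$ with $c_i'\to 0$ possibly along exactly the indices that carry the lower frame bound, so the lower bound can be destroyed. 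Fortunately the worst case never occurs, and this is the observation your write-up is missing: since $\{\phi_{t_i}\}_{i\in I}$ is a frame with upper (Bessel) bound $B$, testing the Bessel inequality on $f=\phi_{t_i}$ gives $\|\phi_{t_i}\|^4\le B\|\phi_{t_i}\|^2$, hence $\|\phi_{t_i}\|^2\le B$ for every $i$. Thus with $c_i\equiv 1$ the map $\phi$ is bounded by $B$ on the countable support $X'=\{t_i: i\in I\}$ and $\nu(X\setminus X')=0$, which is all that (ii) requires; this is precisely what the paper's proof relies on. With that one-line observation inserted and the rescaling deleted, your argument is complete and agrees with the paper's.
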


\begin{proof}
The implication $\Leftarrow$ follows from Theorem \ref{db}. Now assume that there exists $\{t_i\}_{i\in I}$ such that $\{\phi_{t_i}\}_{i\in I}$ is a frame in $\mathcal H$. Since some points might be sampled multiple times, we need to define a counting measure
\[
\nu = \sum_{i\in I} \delta_{t_i},
\]
where $\delta_t$ denotes the point mass at $t\in X$. Since singletons are measurable, $\nu$ is a measure on $X$ and the frame operator of $\{\phi_t\}_{t\in X}$  with respect to $\nu$ is the same as the frame operator of $\{\phi_{t_i}\}_{i\in I}$. Thus, $\{\phi_t\}_{t\in X}$  is a continuous frame with respect to $\nu$, which is bounded on countable set $X'=\{t_i:i \in I\}$  and $\nu(X\setminus X')=0$.
\end{proof}

 \section{Examples}\label{S5}
 
In the final section we show applications of the discretization theorem from the previous section. We do not aim to show the most general results, but instead we illustrate Theorem \ref{db} for the well-known classes of continuous frames. While it might look surprising at first glance, we can also apply Theorem \ref{db} for discrete frames.

\begin{example}[Discrete frames]
Suppose that $\{\psi_n\}_{n\in\N}$ is a tight frame of vectors in the unit ball of a Hilbert space $\mathcal H$ with frame constant $K>0$. Let $\mu$ be the measure on $X=\N$ such that $\mu(\{n\})=1/K$ for all $n\in\N$. Hence, we can treat $\{\psi_n\}_{n\in X}$ as a continuous Parseval frame. Then, by Theorem \ref{db} there exists a sampling function $\kappa: \N \to \N$ such that $\{\psi_{\kappa(n)}\}_{n\in\N}$ is a frame with bounds $A_0$ and $3B_0$. Hence, we obtain a weak version of Theorem \ref{fp} on frame partitions.
\end{example}

A general setting for which Theorem \ref{db} applies involves continuous frames obtained by square integrable group representations. 

\begin{definition}
Let $G$ be locally compact group and let $\mu$ be the left Haar measure on $G$. Let $\pi: G \to \mathcal U(\mathcal H)$ be its unitary representation. We say that $\pi$ is a square integrable representation if there exits vectors $\psi^1,\ldots,\psi^n \in\mathcal H$ and constants $0<A \le B< \infty$ such that 
\begin{equation}\label{sr}
A||f||^2 \le \sum_{i=1}^n \int_G |\langle f, \pi(g)(\psi^i) \rangle|^2 d\mu (g) \le B ||f||^2 \qquad\text{for all }f\in\mathcal H.
\end{equation}
\end{definition}

For the sake of simplicity assume that $n=1$ in the above definition. Then, a square integrable representation defines a continuous frame on $(G,\mu)$ of the form $g \mapsto \pi(g)(\psi)$. The above definition encompasses three major examples of continuous frames: continuous Fourier frames,
continuous Gabor frames, and continuous wavelets.

\begin{example}[Fourier frames] Let $G=\R$ and let $\mu$ be the Lebesgue measure. Let $S \subset \R$ be a measurable subset of $\R$ of finite measure and let $\mathcal H =L^2(S)$. Define $\pi: \R \to L^2(S)$ 
\[
\pi(t)(\psi) = e^{2\pi i t \cdot } \psi, \qquad t\in \R, \psi\in L^2(S).
\]
Take $\psi=\ch_S$. Then, by the Plancherel theorem for any $f\in L^2(S)$,
\[
\int_{\R}  |\langle f, \pi(t)(\ch_S) \rangle|^2 dt = \int_{\R} \bigg| \int_S f(x) e^{-2\pi i t x} dx\bigg|^2 dt = \int_{\R} |\hat f(t)|^2 dt = ||f||^2.
\]
Here, we identify $L^2(S)$ with the subspace of $L^2(\R)$ of functions vanishing outside of $S$ and $\hat f$ is the Fourier transform of $f$. Thus, $\pi$ is a square integrable representation and $\{\phi_t:=e^{2\pi i t \cdot} \ch_S\}_{t\in\R}$ is a continuous Parseval frame in $L^2(S)$. By Theorem \ref{db} there exists a sampling sequence $\{t_n\}_{n\in\Z}$ such that $\{\phi_{t_n}\}_{n\in\Z}$ is a frame for $L^2(S)$ with bounds $A_0|S|$ and $3B_0 |S|$. 
This way we recover the result of Nitzan, Olevskii, and Ulanovskii \cite{NOU} on the sampling of continuous Fourier frames. 

\begin{theorem} For every set $S \subset \R$ of finite measure, there exists a discrete set of frequencies  $\Lambda \subset \R$ such that $\{e^{2\pi i x \lambda} \}_{\lambda \in \Lambda}$ is a frame in $L^2(S)$ with bounds $A \ge c|S|$ and $B \le C |S|$, where $c$ and $C$ are absolute constants.
\end{theorem}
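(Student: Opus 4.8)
The plan is to deduce this theorem as an immediate consequence of Theorem~\ref{db} applied to the continuous Parseval frame $\{\phi_t = e^{2\pi i t\cdot}\ch_S\}_{t\in\R}$ in $L^2(S)$ constructed just above. First I would record that this family is indeed a continuous Parseval frame: by the Plancherel theorem the computation in the preceding example shows $\int_\R |\langle f,\phi_t\rangle|^2\,dt = \|f\|^2$ for every $f\in L^2(S)$, so $A=B=1$ in the notation of Theorem~\ref{db}. Next I would verify the boundedness hypothesis \eqref{cf2}: since $\|\phi_t\|^2 = \|e^{2\pi i t\cdot}\ch_S\|^2_{L^2(S)} = |S|$ for every $t$, the family is bounded with constant $N=|S|$.

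With these two observations in hand, Theorem~\ref{db} applies directly. It produces a sampling sequence $\{t_n\}_{n\in I}$, $I\subset\N$, such that $\{\phi_{t_n}\}_{n\in I}$ is a frame for $L^2(S)$ with bounds $A_0 N$ and $3B_0 N \frac{B}{A} = 3B_0 N$. Setting $\Lambda = \{t_n : n\in I\}$ (a countable, hence discrete in the relevant sense, set of frequencies) and recalling that $\phi_{t_n}$ is precisely the exponential $e^{2\pi i x t_n}$ restricted to $S$, we obtain that $\{e^{2\pi i x\lambda}\}_{\lambda\in\Lambda}$ is a frame for $L^2(S)$ with lower bound $A_0|S|$ and upper bound $3B_0|S|$. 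Taking $c = A_0$ and $C = 3B_0$, which are absolute constants coming from Theorem~\ref{fp}, gives exactly the claimed statement.

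One minor point I would address is the word ``discrete'': the set $\Lambda$ is at most countable, and one should note it can be taken to be a genuinely discrete subset of $\R$ (no finite accumulation points) — this follows because a frame of exponentials in $L^2(S)$ with $|S|<\infty$ cannot have frequencies accumulating, or alternatively one can simply interpret ``discrete'' as ``countable'' as is common in this literature; in any case no extra work beyond Theorem~\ref{db} is needed since the theorem already delivers an indexed sequence. I do not anticipate a genuine obstacle here: the entire content of the theorem has been front-loaded into Theorem~\ref{db} and the Plancherel computation, so the proof is essentially a one-line citation. The only thing to be careful about is bookkeeping the constants — making sure the factor $B/A = 1$ is used and that $A_0, B_0$ are the universal constants of Theorem~\ref{fp}, so that $c$ and $C$ genuinely do not depend on $S$.

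\begin{proof}
By the Plancherel computation in the preceding example, $\{\phi_t = e^{2\pi i t\cdot}\ch_S\}_{t\in\R}$ is a continuous Parseval frame in $L^2(S)$, so its frame bounds are $A=B=1$. Moreover $\|\phi_t\|^2 = |S|$ for every $t\in\R$, so the family is bounded by $N=|S|$ in the sense of \eqref{cf2}. Theorem~\ref{db} therefore yields a sequence $\{t_n\}_{n\in I}$ in $\R$, with $I\subset\N$, such that $\{\phi_{t_n}\}_{n\in I}$ is a frame for $L^2(S)$ with bounds $A_0|S|$ and $3B_0|S|\frac{B}{A} = 3B_0|S|$. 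Putting $\Lambda = \{t_n : n\in I\}$, the system $\{e^{2\pi i x\lambda}\}_{\lambda\in\Lambda}$ is a frame in $L^2(S)$ with lower bound $A\ge A_0|S|$ and upper bound $B\le 3B_0|S|$. Taking $c=A_0$ and $C=3B_0$, which are the absolute constants from Theorem~\ref{fp}, completes the proof.
\end{proof}
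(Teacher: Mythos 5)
Your proposal is correct and follows exactly the paper's route: apply Theorem~\ref{db} to the continuous Parseval frame $\{e^{2\pi i t\cdot}\ch_S\}_{t\in\R}$ with $A=B=1$ and boundedness constant $N=\|\ch_S\|^2=|S|$, which yields the frame of exponentials with bounds $A_0|S|$ and $3B_0|S|$, so $c=A_0$ and $C=3B_0$ are absolute. The only point where you diverge is the word ``discrete'': the paper explicitly concedes that Theorem~\ref{db} gives no separation information and invokes Beurling's density theorem for Fourier frames (\cite[Lemma 10.25]{OU}) to bound $\#|\Lambda\cap\Omega|/|\Omega|$ by $4C|S|$ on long intervals, whereas you sketch a qualitative argument that the frequencies cannot accumulate. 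Your argument does work (if $\lambda_n\to\lambda$ with $|S|<\infty$, then $e^{2\pi i\lambda_n x}\ch_S\to e^{2\pi i\lambda x}\ch_S$ in norm, while the Bessel bound forces weak convergence to zero, a contradiction), and it gives discreteness in the topological sense; the paper's Beurling-density route buys more, namely a quantitative upper density bound on $\Lambda$ in terms of $|S|$, but for the bare statement of the theorem your version suffices.
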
 

Note that Theorem \ref{db} does not guarantee in any way that the sampling set $\Lambda = \{ t_n: n\in\Z\}$ is discrete. However, we can invoke Beurling's density theorem for Fourier frames \cite[Lemma 10.25]{OU}. If $\{e^{2\pi i x \lambda} \}_{\lambda \in \Lambda}$ is a Bessel sequence with bound $C|S|$, then there exists a constant $K>0$ such that 
\[
\frac{ \#| \Lambda \cap \Omega|}{|\Omega|} \le 4C|S|,
\]
for every interval $\Omega \subset \R$ with length $|\Omega| \ge K$.
It is worth adding that there exists a continuous Fourier frame which does not admit a discretization by any regular grid, see \cite[Example 2.72]{Fu}.
\end{example}

\begin{example}[Gabor frames]
Let $G=\R^2$ and let $\mu$ be the Lebesgue measure on $G$. Let $\mathcal H = L^2(\R)$. Define the short-time Fourier transform with window $\pi: \R^2 \to L^2(\R)$ by 
\[
\pi(t,s)(\psi) = e^{2\pi i t \cdot} \psi(\cdot-s), \qquad t,s\in \R, \psi\in L^2(\R).
\]
Then for any $f, \psi \in L^2(\R)$, we have the well-known identity
\begin{equation}\label{gi}
\int_{\R}\int_{\R} | \lan f,\pi(s,t)(\psi)\ran|^2 dsdt = ||f||^2 ||\psi||^2.
\end{equation}
Technically, $\pi$ is not a unitary representation. However, translation in time and frequency commute up to a multiplicative factor, so $\pi$ is a projective unitary representation. By \eqref{gi}, if $||\psi||=1$, then $\{\pi(t,s)(\psi)\}_{(t,s) \in \R^2}$ is a continuous Parseval frame in $L^2(\R)$. Invoking Theorem \ref{db} shows the existence of a sampling sequence $\{(t_n,s_n)\}_{n\in \N}$ such that (non-uniformly spaced) Gabor system $\{\pi(t_n,s_n)(\psi)\}_{n\in\N}$ is a frame. With trivial modifications, the above example also holds for higher dimensional Gabor frames in $L^2(\R^d)$.
\end{example}

Due to the results of Feichtinger and Janssen \cite{FJ}, it is not true that any sufficiently fine lattice produces a Gabor frame for a general Gabor window $\psi\in L^2(\R)$. However, by the results of Feichtinger and Gr\"ochenig \cite{FG1, FG2}, a sufficiently well-behaved window $\psi$ in Feichtinger's algebra $M^{1,1}(\R)$ induces a Gabor frame for all sufficiently fine choices of time-frequency lattices.

\begin{example}[Wavelet frames] 
Let $G$ be the affine $ax+b$ group, which is a semidirect product of the translation group $\R$ by the full dilation group $\R_*$. Then, $d\mu(a,b)=|a|^{-2}dadb$ is the left Haar measure on $G$. Define the continuous wavelet transform $\pi: \R^2 \to L^2(\R)$ by
\[
\pi(a,b)(\psi) = |a|^{-1/2} \psi\bigg( \frac{\cdot - b}a \bigg), \qquad a\ne 0, b \in \R, \psi \in L^2(\R).
\]
Then, $\pi$ is a square integrable representation of $G$.
If $\psi$ satisfies the admissibility condition
\[
\int_{\R \setminus \{0\} } \frac{|\hat \psi(\xi)|^2}{|\xi|} d\xi =1,
\]
then $\{\pi(a,b)(\psi)\}_{(a,b) \in G}$ is a continuous Parseval frame in $L^2(\R)$ known as a continuous wavelet.
Again, invoking Theorem \ref{db} shows the existence of a sampling sequence $\{(a_n,b_n)\}_{n\in \N}$ corresponding to a discrete (albeit non-uniformly spaced) wavelet frame $\{\pi(a_n,b_n)(\psi)\}_{n\in\N}$.
\end{example}

\begin{acknowledgement}
The author was partially supported by the NSF grant DMS-1665056 and by a grant from the Simons Foundation \#426295. The author is grateful for useful comments of the referees and for an inspiring conversation with Hans Feichtinger.
\end{acknowledgement}

\bibliographystyle{amsplain}

\end{document}